\thanks{The first author is co-funded by the European Social Found (ESF) and National Sources, in
the framework of the program \textquotedblleft Support of Post-Doctoral Fellows\textquotedblright\, of the
\textquotedblleft Operational Program Education and Life Long Learning\textquotedblright\, of the Hellenic Ministry
of Education, Life Long Learning and Religious Affairs.}
\subjclass[2010]{Primary 53C40, 53A07, 35J47, 58J05}
\keywords{Strong maximum principle, sections, tensors, minimal maps, parallel mean curvature, Bernstein theorems}
\def\mric{\operatorname{Ric}_M}
\def\real     #1{{\mathbb R^{#1}}}
\def\uu       #1#2#3{{#1}^{#2#3}}
\def\equationcolor {\color{black}}
\def\textcolor     {\color{black}}
\def\bcoleq    {\begin{equation}\equationcolor}
\def\ecoleq    {\textcolor\end{equation}}
\def\bcoleqn   {\equationcolor\begin{eqnarray}}
\def\ecoleqn   {\end{eqnarray}\textcolor}
\def\gm{{\operatorname{g}_M}}
\def\gn{{\operatorname{g}_N}}
\def\gk{{\operatorname{g}_{M\times N}}}
\def\rm{{\operatorname{R}_M}}
\def\rn{{\operatorname{R}_N}}
\def\rk{{\operatorname{R}_{M\times N}}}
\def\sk{{\operatorname{s}_{M\times N}}}
\def\rind{\operatorname{R}}
\def\sind{\operatorname{s}}
\def\Sind{\operatorname{S}}
\def\T{\operatorname{T}}
\def\Aut{\operatorname{Aut}}
\def\P{\operatorname{P}}
\def\Q{\operatorname{Q}}
\def\gE{{\operatorname{g}_{E}}}
\def\dF{\operatorname{d}\hspace{-3pt}F}
\def\df{\operatorname{d}\hspace{-3pt}f}
\def\gind{\operatorname{g}}
\def\Gind{\operatorname{G}}
\def\smin{\sigma}
\def\eig{\operatorname{Eig}}
\def\sym{\operatorname{Sym}}
\DeclareMathOperator*{\Ric}{Ric}
\DeclareMathOperator*{\Id}{Id}
\DeclareMathOperator*{\rank}{rank}
\newtheorem{theorem}{Theorem}[section]
\newtheorem{mythm}{Theorem}
\newtheorem{lemma}[theorem]{Lemma}
\newtheorem{definition}[theorem]{Definition}
\theoremstyle{definition}
\newtheorem{remark}[theorem]{Remark}
\newtheorem{example}[theorem]{Example}
\newcommand{\bfig}{\begin{figure}}
\newcommand{\tsum}{\textstyle\sum}
\newcommand{\efig}{\end{figure}}
\def\pproof#1{\@ifnextchar[\opargproof
{\opargproof[\it Proof of #1.]}}
\def\opargproof[#1]{\par\noindent {\bf #1 }}
\numberwithin{equation}{section}
\begin{document}
\title[Strong Maximum Principles]{The Strong Elliptic Maximum Principle For Vector Bundles \\
and Applications to Minimal Maps
%The Strong Elliptic Maximum Principle For Tensors And Applications in Geometry
}
\author[Andreas Savas-Halilaj]{\textsc{Andreas Savas-Halilaj}}
\author[Knut Smoczyk]{\textsc{Knut Smoczyk}}
\address{Andreas Savas-Halilaj\newline
Leibniz Universit\"at Hannover\newline
Institut f\"ur Differentialgeometrie\newline
Welfengarten 1\newline
30167 Hannover\newline
Germany\newline
{\sl E-mail address:} {\bf savasha@math.uni-hannover.de}
}
\address{Knut Smoczyk\newline
Leibniz Universit\"at Hannover\newline
Institut f\"ur Differentialgeometrie\newline
Welfengarten 1\newline
30167 Hannover\newline
Germany\newline
{\sl E-mail address:} {\bf smoczyk@math.uni-hannover.de}
}
\date{}

\begin{abstract}
Based on works by Hopf, Weinberger, Hamilton and Evans, we state and prove the strong
elliptic maximum principle for smooth sections in vector bundles over Riemannian manifolds
and give some applications in Differential Geometry.
Moreover, we use this maximum principle to obtain various rigidity theorems and Bernstein
type theorems in higher codimension for minimal maps between Riemannian manifolds.
\end{abstract}

\maketitle
%%%%%%%%%%%%%%%%%%%%%%%%%%%%%%%%%%%%%%%%%%%%%%%%%%%%%%%%%%%%%%%%%%%%%%%%%

\section{Introduction}
The maximum principle is one of the most powerful tools used in the theory
of PDEs  and Geometric Analysis. In general, maximum principles
for solutions of second order elliptic differential equations, that are defined in the closure of a
bounded domain of the euclidean space, appear in two forms.
The \textit{weak maximum principle} states that
the maximum of the solution is attained at the boundary of the domain, but in principle it
might occur in the interior as well. On the other hand, the \textit{strong maximum principle} asserts that the
solution achieves its maximum only at boundary points, unless it is constant. For instance,
H. Hopf \cite{hopf} established such strong maximum principles for a wide class of general
second order differential equations. For example, he proved that if a solution $u$ of
the uniformly elliptic differential equation
\begin{gather}\label{elliptic}
\mathscr{L}u=0,\quad\mathscr{L}=\sum_{i,j=1}^{m}a^{ij}\partial^2_{ij}+\sum_{j=1}^{m}b^j\partial_j, \tag{$\ast$}
\end{gather}

attains its supremum or infimum at an interior point
of its domain $D$ of definition, then it must be constant.

Equivalently, the above strong elliptic maximum principle of Hopf can be interpreted as follows: If
a solution $u$ of $\mathscr{L}u=0$ maps
an interior point of $D$ to the boundary of the set $K=(\inf_{D}u,\sup_{D}u)$, then $u$ maps
any point of $D$ to the boundary of $K$ and hence it must be constant. For the proof of
this strong maximum principle Hopf used the \textit{Hopf Lemma}, which implies that the subset
$B\subset D$ consisting of points where $u$ attains a value in $\partial K$ is open. Since by continuity
$B$ is also closed, one has $B=D$, if $D$ is connected and $B$ is non-empty.

The generalization of Hopf's maximum principle to elliptic and semi-linear parabolic systems has
been first considered by H. Weinberger \cite{weinberger}. Let us recall briefly here the elliptic
version of this strong maximum principle: Assume that the vector valued map $$u:D\subset\real{m}\to\real{n},\quad u:=(u_1,\dots,u_n),$$
is a solution of the differential system
$$\mathscr{L}u+\Psi(u)=0,$$
such that $u(D)$ is contained in a closed convex set $K\subset\real{n}$.
Here $\mathscr{L}$ is a second order uniformly elliptic
differential operator of the form given in (\ref{elliptic}), $D$ is an open domain of $\real{m}$ and $\Psi:\real{n}\to\real{n}$ is a Lipschitz
continuous map. Suppose further that for any boundary point $y_0\in\partial K$ the vector $\Psi(y_0)$
belongs to the tangent cone of $K$ at $y_0$ (for the exact definition see Section $2.1$). Under various additional
assumptions on the regularity of the boundary of the convex set $K$, Weinberger proved that, if an interior point of $D$ is
mapped via $u$ to a boundary point of $K$, then every point of $D$ is mapped to the boundary of $K$. Recently,
L.C. Evans \cite{evans1} gave a proof of Weinberger's maximum principle without imposing any regularity assumption on the boundary of the convex set $K$.

In his seminal papers, R. Hamilton \cite{hamilton2,hamilton1} derived parabolic maximum principles for sections in Riemannian vector bundles. There one compares the solution of a parabolic differential equation with a solution of an associated ODE. The weak parabolic maximum principle of Weinberger
can be seen as a special case of Hamilton's more general maximum principle in \cite{hamilton1}
since Weinberger's result follows from the application of Hamilton's maximum principle in the
case of a trivial bundle.
Hamilton's maximum principle appears in many different forms and became an important tool
in the study of geometric evolution equations (cf. \cite{ecker,ni1,brendle,andrews}).

Here we state and prove the strong elliptic maximum principle for sections in Riemannian vector bundles. This maximum principle is in the most general form and contains all the previous results by Hopf, Weinberger, Evans and it also
contains the elliptic version of Hamilton's parabolic
maximum principle. It turns out that it is extremely powerful and
we apply it to derive optimal Bernstein type results for minimal maps between Riemannian manifolds.

In order to state the elliptic version of the strong maximum principle for sections in vector bundles,
we must introduce an appropriate notion of convexity for subsets of Riemannian vector bundles. In
\cite{hamilton2} Hamilton gave the following definition:

\begin{definition}{\bf(Hamilton).}
Let $(E,\pi,M)$ be a vector bundle over the manifold $M$ and let $K$ be a closed subset of $E$.
\begin{enumerate}[(i)]

\item
The set $K$ is said to be fiber-convex or convex in the fiber, if for each point $x$ of $M$, the
set $K_x:=K\cap E_x$ is a convex subset of the fiber $E_x=\pi^{-1}(x)$.
\medskip
\item
The set $K$ is said to be invariant under parallel transport, if for every smooth
curve $\gamma:[0,b]\to M$ and any vector $v\in K_{\gamma(0)}$, the unique parallel section
$v(t)\in E_{\gamma(t)}$, $t\in [0,b]$, along $\gamma(t)$ with $v(0)=v$, is contained
in $K$.
\medskip
\item
A fiberwise map $\Psi:E\to E$ is a map such  that $\pi\circ\Psi=\pi$, where $\pi$ denotes the bundle projection. We say a fiberwise map $\Psi$ points
into $K$ (or is inward pointing), if for any $x\in M$ and any
$\vartheta\in \partial K_x$ the vector $\Psi(\vartheta)$ belongs to
the tangent cone $C_{\vartheta}K_x$ of $K_x$ at $\vartheta$.
\end{enumerate}
\end{definition}

Next we state the strong elliptic  maximum principle for sections in Riemannian vector bundles.
Throughout the paper all manifolds will be smooth and connected.

Let $(E,\pi,M)$ be a vector bundle of rank $k$ over a smooth manifold $M$. Suppose $\gind_E$ is a
bundle metric on $E$ and that $\nabla$ is a metric connection on $E$.
In this paper
we consider uniformly elliptic operators $\mathscr{L}$ on $\Gamma(E)$ of second order
that are given locally by
\begin{equation}\label{dast}
\mathscr{L}=\sum_{i,j=1}^ma^{ij}\nabla^{2}_{e_{i},e{_{j}}}+\sum_{j=1}^mb^{j}\nabla_{e_{j}}, \tag{$\ast\ast$}
\end{equation}
where $a\in\Gamma(TM\otimes TM)$ is a symmetric, uniformly positive definite tensor and
$b\in\Gamma(TM)$ is a smooth vector field such that
$$a=\sum_{i,j=1}^m\uu aij e_i\otimes e_j\quad\text{and}\quad b=\sum_{j=1}^mb^je_j$$
in a local frame field $\{e_1,\dots,e_k\}$ of $TM$.

\begin{mythm}{\bf (Strong Elliptic Maximum Principle).}\label{mp1}\\
Let $(M,\gm)$ be a Riemannian manifold
and $(E,\pi,M)$ a vector bundle
over $M$ equipped with a Riemannian metric $\gind_{E}$ and a metric connection $\nabla$.
Let $K$ be a closed fiber-convex subset of the bundle $E$ that is invariant under parallel
transport and let $\phi\in K$ be  a smooth section such that
\begin{equation*}
\mathscr{L}\phi+\Psi(\phi)=0,
\end{equation*}
where here $\mathscr{L}$ is a uniformly elliptic operator of second order of the form given in (\ref{dast})  and $\Psi$ is a smooth
fiberwise map that points into $K$. If there exists a point $x_0$ in the interior of $M$ such that $\phi(x_0)\in\partial K_{x_0}$, then $\phi(x)\in\partial K_x$ for any point $x\in M$. If, additionally, $K_{x_0}$ is strictly convex at $\phi(x_0)$, then $\phi$ is a parallel section.
\end{mythm}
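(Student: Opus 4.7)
My plan is to reduce the bundle statement to the classical scalar Hopf strong maximum principle via an open--closed argument. Set $B := \{x \in M : \phi(x) \in \partial K_x\}$; continuity of $\phi$ together with closedness of each fiber $K_x$ makes $B$ closed in $M$, and $x_0 \in B$ by hypothesis. Since $M$ is connected, it suffices to prove that $B$ is open.

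To that end, fix $x_0 \in B$. By fiber-convexity of $K_{x_0}$, pick a supporting hyperplane at $\phi(x_0)$ with unit outward normal $\nu_0 \in E_{x_0}$. On a small geodesic ball $U$ around $x_0$, choose an orthonormal frame $\{e_1,\dots,e_k\}$ of $E|_U$ obtained by parallel-transporting an orthonormal basis at $x_0$ along radial geodesics, so that $(\nabla e_a)(x_0) = 0$. In this \emph{synchronous} frame, define local sections $\nu,\phi_0 \in \Gamma(E|_U)$ by freezing their components to $\nu_0^a$ and $\phi^a(x_0)$, respectively; both are radially parallel and satisfy $\nabla\nu(x_0) = \nabla\phi_0(x_0) = 0$. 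Because $K$ is invariant under parallel transport and $\nabla$ is metric, the parallel-transported half-space $\{y \in E_x : \langle y - \phi_0(x),\nu(x)\rangle \le 0\}$ supports $K_x$ at $\phi_0(x) \in \partial K_x$. Hence
\[
u(x) := \langle \phi_0(x) - \phi(x),\,\nu(x)\rangle
\]
is nonnegative on $U$ with $u(x_0) = 0$, so $x_0$ is an interior minimum of $u$.

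The core step is a scalar Hopf-type inequality for $u$. A direct computation using the product rule, the metric compatibility of $\nabla$, the equation $\mathscr{L}\phi = -\Psi(\phi)$, and the constancy of $\langle\phi_0,\nu\rangle$ in the frame yields
\[
\widetilde{\mathscr{L}}\,u \;=\; \langle \Psi(\phi),\nu\rangle \;-\; 2a^{ij}\langle \nabla_j\phi,\nabla_i\nu\rangle \;-\; \langle \phi,\mathscr{L}\nu\rangle,
\]
where $\widetilde{\mathscr{L}}$ denotes the scalar elliptic operator with the same principal part as $\mathscr{L}$. Writing $\Psi(\phi) = \Psi(\phi_0) + (\Psi(\phi) - \Psi(\phi_0))$, the inward-pointing hypothesis applied at the boundary point $\phi_0(x) \in \partial K_x$ with outward normal $\nu(x)$ gives $\langle \Psi(\phi_0(x)),\nu(x)\rangle \le 0$, while smoothness of $\Psi$ bounds the remainder by $|\phi - \phi_0|$. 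Using the vanishing of $\nabla\nu$ at $x_0$ and the interior-minimum identity $\nabla u(x_0) = 0$ (which forces $\nabla\phi(x_0)$ to be tangent to the hyperplane), one absorbs the lower-order terms into an inequality $\widetilde{\mathscr{L}}\,u \le C_1 u + C_2|\nabla u|$ on a smaller ball around $x_0$. The classical scalar strong maximum principle of Hopf then forces $u \equiv 0$ near $x_0$, so $\phi(x) \in \partial K_x$ there, proving that $B$ is open.

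For the strict-convexity addendum: $u \equiv 0$ near $x_0$ places each $\phi(x)$ into the intersection of the parallel-transported supporting hyperplane with $K_x$. Parallel transport is a linear isometry sending $K_x$ onto $K_{x_0}$ and that hyperplane onto the original one, so strict convexity of $K_{x_0}$ at $\phi(x_0)$ reduces this intersection to the single point $\phi_0(x)$; hence $\phi(x) = \phi_0(x)$ locally, and in particular $\nabla\phi(x_0) = 0$. Since strict convexity is isometrically transferred to $K_x$ at $\phi(x)$ for each $x$ near $x_0$, the same procedure may be re-run at every nearby base point, yielding $\nabla\phi \equiv 0$ on an open set, which a standard unique-continuation (or open--closed) argument extends to all of $M$. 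The main technical obstacle will be the absorption step in the scalar inequality: controlling the tangential displacement inside $\Psi(\phi) - \Psi(\phi_0)$ together with the corrections $\langle \phi,\mathscr{L}\nu\rangle$ and $a^{ij}\langle \nabla_j\phi,\nabla_i\nu\rangle$ by $C_1 u + C_2|\nabla u|$ requires a careful Taylor analysis in the synchronous frame that exploits both $\nabla u(x_0) = 0$ and the nonnegativity of the Hessian $\nabla^2 u(x_0)$.
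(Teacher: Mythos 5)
Your approach is genuinely different from the paper's. The paper works in a synchronous orthonormal frame $\{\phi_1,\dots,\phi_k\}$ of $E$ to write $\phi=\sum u_i\phi_i$, shows that $u=(u_1,\dots,u_k):U\to\real{k}$ solves a uniformly elliptic \emph{system} $\mathscr{\tilde L}u+\Phi(u)=0$ with values in the convex set $\mathcal{K}\subset\real{k}$ corresponding to $K_{x_0}$, verifies that $\Phi$ points into $\mathcal{K}$ using the B\"ohm--Wilking lemma (Lemma~\ref{wilking}/\ref{wilking2}) to handle the second-order frame terms $\sum u_j\mathscr{L}\phi_j$, and then invokes the Weinberger--Evans theorem (Theorem~\ref{weinberger}) as a black box. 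You instead pick a single supporting hyperplane, project onto the scalar $u=\langle\phi_0-\phi,\nu\rangle$, and try to close a scalar Hopf inequality yourself.

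The gap is in the absorption step, and it is not a technicality you can resolve with "a careful Taylor analysis." Decompose $\phi(x)-\phi_0(x)=-u(x)\nu(x)+\tau(x)$ with $\tau$ tangential to the fixed hyperplane $H_x$. The inward-pointing hypothesis gives $\langle\Psi(y),\nu(x)\rangle\le 0$ only for $y\in\partial K_x$ at which $\nu(x)$ is an outward normal; by parallel-transport invariance this is guaranteed at $y=\phi_0(x)$, but $H_x\cap\partial K_x$ may reduce to the single point $\phi_0(x)$ (it does whenever $K_{x_0}$ is strictly convex at $\phi(x_0)$). Consequently the first-order term $\langle D\Psi(\phi_0)\tau,\nu\rangle$ in the expansion of $\langle\Psi(\phi),\nu\rangle$ carries no sign, and $|\tau|=O(|x-x_0|)$ is not bounded by $C_1u+C_2|\nabla u|$: one can have $u\equiv 0$ along a curve through $x_0$ while $\tau$ is nonzero there, in which case your right-hand side vanishes but the left-hand side does not. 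The same defect afflicts the frame-curvature terms: $a^{ij}\langle\nabla_j\phi,\nabla_i\nu\rangle$ and $\langle\phi-\phi_0,\mathscr{L}\nu\rangle$ are both $O(|x-x_0|)$ but not $O(u+|\nabla u|)$ near $x_0$. Facts such as $\nabla u(x_0)=0$ and $\nabla^2u(x_0)\ge 0$ are pointwise statements at $x_0$ and do not yield the neighbourhood differential inequality that Hopf's principle requires. This is precisely the difficulty that Weinberger's "slab conditions," Wang's distance-to-boundary function, and Evans's viscosity-solution argument were designed to overcome: the correct scalar quantity is $\dist(\phi(x),\partial K_x)$, which sees the \emph{nearest} supporting hyperplane at each $x$ rather than the frozen one, but is in general not $C^2$. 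In effect your proposal attempts to reprove Weinberger--Evans from scratch with a fixed linear functional, and the tangential displacement blocks that route. (The gap persists even in the strictly convex case; strict convexity makes $H_x\cap\partial K_x$ a single point and so \emph{worsens} the mismatch between the fixed hyperplane and the nearby boundary.) You also never exploit the B\"ohm--Wilking lemma, which in the paper's proof is exactly what controls the second-order curvature terms coming from the frame — a hint that some version of it would also be needed to salvage your computation of $\langle\phi_0,\mathscr{L}\nu\rangle$ and $\langle\mathscr{L}\phi_0,\nu\rangle$.
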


For the classification of minimal maps between Riemannian manifolds we will later use
a special case of Theorem \ref{mp1} for
smooth, symmetric tensors $\phi\in\sym(E^*\otimes E^*)$. Before stating the result let us
recall the following definition due to Hamilton \cite[Section 9]{hamilton2}.

\begin{definition}{\bf (Hamilton).}\label{def null}
A fiberwise map $\Psi:\sym(E^*\otimes E^*)\to\sym(E^*\otimes E^*)$ is said to satisfy the null-eigenvector
condition, if whenever $\vartheta$ is a non-negative symmetric $2$-tensor at a point $x\in M$ and
if $v\in T_{x}M$ is a null-eigenvector of $\vartheta$, then $\Psi(\vartheta)(v,v)\ge 0$.
\end{definition}

The next theorem is the elliptic analogue of the
maximum principle of Hamilton \cite[Lemma 8.2, p. 174]{hamilton1}.
More precisely:

\begin{mythm}\label{mp2}
Let $(M,\gm)$ be a Riemannian manifold and $(E,\pi,M)$ a Riemannian vector bundle
over $M$ equipped with a metric connection.
Suppose that $\phi\in\sym(E^*\otimes E^*)$ is non-negative definite
and satisfies
$$\mathscr{L}\phi+\Psi(\phi)=0,$$
where here $\Psi$ is a smooth fiberwise map satisfying the null-eigenvector condition.
If there is an interior point of $M$ where $\phi$ has a zero-eigenvalue, then
$\phi$ must have a zero-eigenvalue everywhere. Additionally, if $\phi$ vanishes identically  at an
interior point of $M$, then $\phi$ vanishes everywhere.
\end{mythm}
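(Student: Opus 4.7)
The plan is to deduce Theorem \ref{mp2} from Theorem \ref{mp1} by choosing $K$ to be the fiberwise cone of non-negative definite tensors, and then to obtain the stronger vanishing assertion by reducing to the classical scalar Hopf principle applied to the trace of $\phi$. Set $K=\{\vartheta \in \sym(E^{*}\otimes E^{*}):\vartheta \geq 0\}$. Each fiber $K_{x}$ is the closed convex cone of positive semi-definite symmetric bilinear forms on $E_{x}$; the set $K$ is invariant under parallel transport because $\nabla$ is a metric connection, so parallel transport along any curve is a $\gE$-isometry and hence preserves positivity; and at every boundary point $\vartheta \in \partial K_{x}$, a computation in an eigenbasis of $\vartheta$ identifies the tangent cone $C_{\vartheta}K_{x}$ with $\{\eta:\eta(v,v)\geq 0 \text{ for every } v \in \ker\vartheta\}$. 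The null-eigenvector hypothesis on $\Psi$ is thus precisely the inward-pointing property. Since $\phi \in K$ and $\phi(x_{0})\in \partial K_{x_{0}}$, Theorem \ref{mp1} gives $\phi(x)\in \partial K_{x}$ for every $x \in M$, which is the first conclusion.

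For the vanishing statement I assume $\phi(x_{0})=0$ and first establish $\Psi(0)=0$. Fix $w \in E_{x_{0}}$ and let $W$ be a local section of $E$ with $W(x_{0})=w$ and $(\nabla W)(x_{0})=0$. The scalar function $f:=\phi(W,W)$ is non-negative with $f(x_{0})=0$, so $\mathscr{L}f(x_{0})\geq 0$ by uniform positivity of $a$ and the Hessian test at an interior minimum. Using $\phi(x_{0})=0$ together with $(\nabla W)(x_{0})=0$, this reduces to $(\mathscr{L}\phi)(x_{0})(w,w)\geq 0$, and since $w$ was arbitrary, $\mathscr{L}\phi(x_{0})\geq 0$ in the PSD order. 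The equation $\mathscr{L}\phi=-\Psi(\phi)$ then forces $\Psi(0)\leq 0$; conversely, the null-eigenvector condition at $\vartheta=0$ (where every vector is a null-eigenvector) gives $\Psi(0)\geq 0$, and so $\Psi(0)=0$.

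Now set $u:=\trace_{\gE}\phi\geq 0$. Since $\nabla\gE=0$, the trace commutes with $\mathscr{L}$, so $\mathscr{L}u=-\trace_{\gE}\Psi(\phi)$. Because $\Psi$ is smooth and $\Psi(0)=0$, there exist a neighborhood $U$ of $x_{0}$ and $C>0$ with $|\Psi(\phi(x))|\leq C|\phi(x)|\leq Cu(x)$ on $U$, using the equivalence of the operator norm and the trace on the PSD cone. Therefore $u\geq 0$ satisfies the scalar linear inequality $(\mathscr{L}-C)u\leq 0$ on $U$ and attains the interior minimum value $0$ at $x_{0}$, so the classical strong Hopf minimum principle forces $u\equiv 0$ on a ball around $x_{0}$. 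Repeating the argument at every interior zero of $\phi$ shows that $Z:=\{x \in M:\phi(x)=0\}$ is open; as $Z$ is also closed and non-empty and $M$ is connected, $Z=M$, proving $\phi\equiv 0$. The principal obstacle is the identity $\Psi(0)=0$: without it the trace equation retains an inhomogeneous term that destroys the applicability of the scalar Hopf principle, so the PSD matrix-valued interior-minimum argument in the second paragraph is the essential ingredient of the plan.
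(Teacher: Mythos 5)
Your first paragraph reproduces the paper's own argument for the first conclusion: same choice of $K$ as the cone of non-negative symmetric $2$-tensors, same identification of the tangent cone with the null-eigenvector condition, same appeal to Theorem A. For the vanishing statement, however, you diverge from the paper: the paper simply observes that each fiber $K_x$, being a cone with vertex at the origin, is strictly convex at $0$, so the second clause of Theorem A forces $\phi$ to be a parallel section, and a parallel section vanishing at $x_0$ vanishes identically. You instead reduce to the scalar Hopf strong minimum principle applied to $u=\trace_{\gE}\phi$. This is a genuinely different and more elementary route that bypasses the strict-convexity clause of Theorem A entirely; the paper's route is shorter once Theorem A is in hand, while yours is self-contained at the scalar level.

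There is, though, a gap in your second paragraph as written. The interior-minimum computation establishes $\Psi(0_{x_0})=0$ only in the single fiber over $x_0$, since the inequality $\mathscr{L}\phi(x_0)\ge 0$ is obtained precisely from $\phi(x_0)=0$; but the Lipschitz estimate $|\Psi(\phi(x))|\le C|\phi(x)|$ on a neighborhood $U$ requires $\Psi(0_x)=0$ for every $x\in U$, which you have not shown and which for a general fiberwise $\Psi$ need not hold. The repair is cheap and in fact shortens the argument: the null-eigenvector condition applied to $\vartheta=0_x$ (where every vector of $E_x$ is a null eigenvector) already gives $\Psi(0_x)\ge 0$ at every $x$ with no PDE input, and then
$$-\trace_{\gE}\Psi(\phi(x))\;\le\;-\trace_{\gE}\Psi(0_x)+\bigl|\trace_{\gE}\Psi(\phi(x))-\trace_{\gE}\Psi(0_x)\bigr|\;\le\;C\,u(x)$$
near $x_0$ by local Lipschitz continuity of $\Psi$, so $(\mathscr{L}-C)u\le 0$ and the scalar Hopf argument goes through. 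In particular the identity $\Psi(0)=0$ is not the essential ingredient you make it out to be; the one-sided inequality $\Psi(0_x)\ge 0$ suffices, and the first half of your second paragraph can be dropped.
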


Since the maximum principle for scalar functions has uncountable many applications in Geometric
Analysis we expect that the strong maximum principle for sections in vector bundles will have plenty of applications as well. In Section \ref{sec3} we will apply this strong maximum principle to derive a classification of minimal maps between Riemannian manifolds.

Before stating our results in this direction, let us introduce some new definitions. Let $(M,\gm)$ and
$(N,\gn)$ be two Riemannian manifolds of dimensions $m$ and $n$ respectively.
For any smooth map $f:M\to N$
its differential $\df$ induces a map $\Lambda^{k}\df:\Lambda^{k}T^*M\to\Lambda^{k}T^*N$ given by
\begin{equation*}
\left(\Lambda^{k}\df\right)(v_1\,,\cdots,v_k):=\df(v_1)\wedge\cdots\wedge\df(v_k),
\end{equation*}
for any smooth vector fields $v_1,\dots,v_k\in TM$.
The map $\Lambda^{k}\df$ is called the $k$-\textit{Jacobian} of $f$. The \textit{supremum norm} or
the $k$-\textit{dilation} $\|\Lambda^{k}\df\|(x)$ of the map $f$ at a point $x\in M$
is defined as the supremum of
$$\sqrt{\det\big([f^{\ast}\gn(v_i,v_j)]_{1\le i,j\le k}\bigr)}$$
when $\{v_1,\dots,v_m\}$ runs over all orthonormal bases of $T_xM$.
The $k$-dilation measures how much the map stretches $k$-dimensional volumes. The map
$f:M\to N$ is called \textit{weakly $k$-volume decreasing} if $\|\Lambda^k\df\|\le 1$, \textit{strictly
$k$-volume decreasing} if $\|\Lambda^k\df\|<1$ and \textit{$k$-volume preserving} if $\|\Lambda^k\df\|=1$.
As usual for $k=1$ we use the term \textit{length} instead of $1$-volume and if $k=2$ we use the term
\textit{area} instead of $2$-volume. The map $f:M\to N$ is called an \textit{isometric immersion}, if $f^*\gn=\gm$.
A smooth map $f:M\to N$ is called \textit{minimal}, if its graph
$$\Gamma(f):=\{(x,f(x))\in M\times N:x\in M\}$$
is a minimal submanifold of $(M\times N,\gk=\gm\times\gn)$.

One of the main objectives in the present article is to prove the following results:
\begin{mythm}\label{thmD}
Let $(M,\gm)$ and $(N,\gn)$ be two Riemannian manifolds. Suppose $M$ is compact, $m=\dim M\ge 2$
and that there exists a constant $\smin>0$
such that the sectional curvatures $\sigma_M$ of $M$ and $\sigma_N$ of $N$ and
the Ricci curvature $\mric$ of $M$ satisfy
$$\sigma_M\,\,>\,\, -\smin,\quad\,\,\frac{1}{m-1}\mric\,\,\ge\,\,\smin\,\,\ge\,\,\sigma_N.$$
If $f:M\to N$ is a minimal map that is weakly length decreasing, then one of the following
holds:
\begin{enumerate}[(i)]
\item
$f$ is constant.
\item
$f$ is an isometric immersion, $M$ is Einstein with  $\mric=(m-1)\sigma$ and the
restriction of $\sigma_N$ to $\df(TM)$ is equal to $\sigma$.
\end{enumerate}
In particular,
any strictly length decreasing minimal map is constant.
\end{mythm}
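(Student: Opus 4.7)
\emph{Proof sketch.} The strategy is to apply Theorem B to the symmetric $2$-tensor
$$\phi := \gm - f^{\ast}\gn \;\in\; \Gamma\bigl(\Sym(T^{\ast}M\otimes T^{\ast}M)\bigr).$$
The assumption that $f$ is weakly length decreasing says precisely that $\phi$ is non-negative definite. Viewing $\phi$ as a section over $M$, or equivalently over the graph $\Gamma(f)\subset M\times N$ via the diffeomorphism $\pi_M|_{\Gamma(f)}$, the first task is to derive a PDE of the form $\mathscr{L}\phi+\Psi(\phi)=0$, where $\mathscr{L}$ is the Laplace--Beltrami operator of the induced metric on the graph (which, once pulled back to $M$, is uniformly elliptic of the form \eqref{dast}) and $\Psi$ is a smooth fiberwise map built from the curvature tensors of $M$ and $N$ and from $\df$. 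The computation proceeds by choosing a singular-value-decomposition frame for $\df$ along $\Gamma(f)$, commuting covariant derivatives using the curvature identities of $M\times N$, and invoking the minimality of $\Gamma(f)$ (which eliminates the trace of the second fundamental form).

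The key algebraic step is to verify the null-eigenvector condition for $\Psi$. Let $v\in T_xM$ be such that $\phi(v,v)=0$, i.e.\ $|\df(v)|_N=|v|_M$. In the adapted frame, the expression $\Psi(\phi)(v,v)$ decomposes into a positive multiple of $\mric(v,v)$, a negative contribution from sectional curvatures of $N$ on planes spanned by $\df(v)$ and the $\df(e_j)$, and a residual term involving sectional curvatures of $M$ on mixed planes transverse to $v$. The hypotheses
$$\sigma_M>-\sigma,\qquad \frac{1}{m-1}\mric\;\geq\;\sigma\;\geq\;\sigma_N$$
are exactly calibrated so that $\Psi(\phi)(v,v)\geq 0$: the Ricci bound absorbs the $M$-term, the upper bound on $\sigma_N$ dominates the $N$-term, and the strict lower bound on $\sigma_M$ handles the mixed contribution with the correct sign.

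With the null-eigenvector condition in hand, Theorem B yields the desired dichotomy. Either $\phi\equiv 0$, in which case $f^{\ast}\gn=\gm$ and $f$ is an isometric immersion; here tracing which inequalities must collapse to equalities in order for $\Psi(\phi)(v,v)=0$ to hold at \emph{every} null direction forces $\mric=(m-1)\sigma\,\gm$ and $\sigma_N|_{\df(TM)}=\sigma$, giving the additional conclusions in case (ii). Otherwise $\phi$ is strictly positive definite throughout $M$ and $f$ is strictly length decreasing; in this case I would close the argument by a Bochner identity for $|\df|^2$, where the strict inequality $\sigma_M>-\sigma$ together with $\sup_M\|\df\|<1$ (available by compactness) produce an inequality $\tfrac{1}{2}\Delta|\df|^2\geq\varepsilon|\df|^2$ with $\varepsilon>0$, and integration over $M$ forces $\df\equiv 0$, so $f$ is constant. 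The main technical obstacle I anticipate is the precise identification of $\Psi(\phi)(v,v)$ on null directions: commuting covariant derivatives along the graph and re-expressing the result on $M$ in the adapted frame is delicate, and any sign or index slip would destroy the null-eigenvector condition on which everything hinges.
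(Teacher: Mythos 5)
Your overall framework agrees with the paper: set $\sind=\gm-f^{\ast}\gn\ge 0$, derive a PDE $\mathscr{L}\sind+\Psi(\sind)=0$ from the second fundamental form of the graph (this is the paper's Lemma 3.4, with $c=1$), verify the null-eigenvector condition using the curvature hypotheses (the paper's Lemma 3.6), and invoke Theorem B. However, there is a genuine gap in how you extract conclusions from Theorem B. Theorem B does \emph{not} give the dichotomy ``$\phi\equiv 0$ or $\phi>0$.'' It only says: if $\phi$ has a zero eigenvalue at one interior point, then it has a zero eigenvalue at \emph{every} point. A non-negative symmetric $2$-tensor can be everywhere degenerate without vanishing identically, so the step from ``$\sind$ has a null eigenvector everywhere'' to ``$\sind\equiv 0$'' is not supplied by Theorem B, nor is it automatic. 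The paper bridges this gap with a second, separate tool: the Second Derivative Criterion (Theorem 2.7). Because the smallest eigenvalue of $\sind$ is identically zero, it trivially attains a local minimum at every point; the criterion then yields $(\mathscr{L}\sind)(v,v)\ge 0$ at each null eigenvector $v$, hence $\Psi(\sind)(v,v)\le 0$ from the PDE. Combining this with $\Psi(\sind)(v,v)\ge 0$ from the null-eigenvector condition forces equality, and only then can one ``trace which inequalities collapse.'' The crucial payoff is that the term $\sum_{k\neq m}\tfrac{1-\lambda_k^2}{1+\lambda_k^2}\bigl(\sigma_M(e_k\wedge e_m)+\sigma\bigr)$ must vanish, and since $\sigma_M>-\sigma$ strictly, this forces $\lambda_k^2=1$ for all $k$, i.e.\ $\sind=0$ at that point. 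You implicitly assume $\Psi(\sind)(v,v)=0$ at null directions without explaining where the reverse inequality comes from; without the second-derivative step the argument does not close.

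A secondary concern is your treatment of the strictly length decreasing branch. The paper does not use a direct Bochner argument here: it observes that strictly length decreasing implies strictly area decreasing and defers to Theorem D. Your proposed inequality $\tfrac{1}{2}\Delta|\df|^2\ge\varepsilon|\df|^2$ is not clearly available: the Bochner formula for a minimal map is with respect to the graph metric $\gind=\gm+f^{\ast}\gn$ (whose Ricci curvature depends on the second fundamental form of the graph, not just on $\Ric_M$), and the target-curvature term can be genuinely negative since the hypotheses allow $\sigma_N$ up to $\sigma>0$. Showing that the Ricci term dominates requires precisely the singular-value bookkeeping that Lemma 3.6 performs; the strict bounds $\sigma_M>-\sigma$ and $\sup\|\df\|<1$ do not, by themselves, yield a coefficient $\varepsilon>0$. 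This route would need substantially more justification before it could be accepted.
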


A similar statement holds in the case of weakly area decreasing maps.
\begin{mythm}\label{thmC}
Let $(M,\gm)$ and $(N,\gn)$ be two Riemannian manifolds. Suppose $M$ is compact, $m=\dim M\ge 2$
and that there exists a constant $\smin>0$
such that the sectional curvatures $\sigma_M$ of $M$ and $\sigma_N$ of $N$ and
the Ricci curvature $\mric$ of $M$ satisfy
$$\sigma_M\,\,>\,\, -\smin,\quad\,\,\frac{1}{m-1}\mric\,\,\ge\,\,\smin\,\,\ge\,\,\sigma_N.$$
If $f:M\to N$ is a smooth minimal map that is weakly area decreasing, then one of the following
holds:
\begin{enumerate}[(i)]
\item
$f$ is constant.
\item
There exists
a non-empty closed set $D$ such that $f$ is an isometric immersion on $D$ and $f$ is strictly
area decreasing on the complement
of $D$. Moreover, $M$ is Einstein on $D$ with  $\mric=(m-1)\sigma$ and the
restriction of $\sigma_N$ to $\df(TD)$ is equal to $\sigma$.
\end{enumerate}
In particular, any strictly area decreasing minimal map is constant
and any area preserving minimal map is an isometric immersion.
\end{mythm}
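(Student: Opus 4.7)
The strategy is to encode the weakly area-decreasing condition as the nonnegativity of a symmetric $2$-tensor built from $f$, to derive an elliptic identity for it from minimality, and to extract the dichotomy (i)--(ii) from Theorem \ref{mp2}.

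On the bundle $E := \Lambda^{2}TM$, equipped with the induced metric and the Levi-Civita connection, I would introduce the smooth section $\phi \in \sym(E^{\ast}\otimes E^{\ast})$ determined by
\[
\phi(X\wedge Y, X\wedge Y) := |X\wedge Y|_{\gm}^{2} - |\df(X)\wedge \df(Y)|_{\gn}^{2},
\]
extended bilinearly. Then $\phi \ge 0$ exactly when $f$ is weakly area-decreasing, and $\phi(x)$ admits a null eigenvector iff $f$ preserves the area of some $2$-plane at $x$. The set $D := \{x\in M: f^{\ast}\gn(x)=\gm(x)\}$ is visibly closed, and the objective is to arrange Theorem \ref{mp2} applied to $\phi$ so that its two alternatives become (i) and (ii).

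Using the minimality of the graph $\Gamma(f)\subset(M\times N,\gk)$, a Simons--Bochner type calculation on $\Gamma(f)$, combined with the Gauss, Codazzi and Ricci identities for this submanifold, should produce the elliptic identity
\[
\mathscr{L}\phi + \Psi(\phi) = 0,
\]
where $\mathscr{L}$ is the uniformly elliptic operator of the form \eqref{dast} induced by the graph metric and $\Psi$ is a fiberwise smooth map packaging the contributions from $\sigma_M$, $\sigma_N$, $\mric$ and the second fundamental form of $\Gamma(f)$. The critical step is then to verify the null-eigenvector condition of Definition \ref{def null} for $\Psi$: if $\phi\ge 0$ admits a null direction $\eta = e_i\wedge e_j$ at $x$, with an orthonormal frame singular-value-diagonalising $\df(x)$ so that $s_i s_j = 1$, one must show $\Psi(\phi)(\eta,\eta)\ge 0$. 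This should follow by grouping three contributions -- the upper bound $\sigma_N\le\smin$ on the target sectional curvatures, the lower bound $\sigma_M>-\smin$ on the domain sectional curvatures, and the Ricci lower bound $\tfrac{1}{m-1}\mric\ge\smin$ on the transverse trace that is forced by the minimal surface equation -- so that the indefinite cross-terms between the two second fundamental forms of $\Gamma(f)$ cancel and the remaining curvature combination is nonnegative.

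Theorem \ref{mp2} then yields the dichotomy: either $\phi>0$ on all of $M$, or $\phi$ has a zero eigenvector at every point of $M$. In the first case, the strict inequality $\sigma_M>-\smin$ promotes the null-eigenvector estimate to a strict one along a direction realising the smallest eigenvalue of $\phi$; evaluating $\mathscr{L}\phi+\Psi(\phi)=0$ at an interior minimum of that eigenvalue on the compact manifold $M$ then forces $\df\equiv 0$, giving alternative (i). In the second case, a further application of Theorem \ref{mp2} to restrictions of $\phi$ onto the parallel null distribution, combined with the equality cases in the curvature estimate, should produce at least one point of $D$, identify $f$ as an isometric immersion on $D$, strictly area-decreasing on the complement, and force $\mric=(m-1)\smin$ and $\sigma_N|_{\df(TD)}=\smin$, giving alternative (ii). The principal obstacle will be the coupled derivation of the PDE and verification of the null-eigenvector property in step two: organising the Gauss, Ricci and Codazzi contributions so that exactly the three curvature hypotheses of the statement suffice for nonnegativity on every null direction, which is precisely what makes the curvature thresholds sharp.
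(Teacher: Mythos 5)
The central step in your plan is to cast the weakly area decreasing condition as nonnegativity of a symmetric $2$-tensor $\phi$ on $\Lambda^2 TM$, derive $\mathscr{L}\phi+\Psi(\phi)=0$, verify the null-eigenvector condition for $\Psi$, and invoke Theorem \ref{mp2}. This is exactly the step that fails, and the authors say so explicitly in the final remarks: the null-eigenvector condition for the fiberwise map governing $\Delta\sind^{[2]}$ appears to hold only on a restricted class of nonnegative $2$-tensors on $\Lambda^2 TM$ (those of the form $\vartheta^{[2]}$ with $\vartheta$ weakly $2$-positive, including $\sind$ itself), not on all nonnegative sections of $\sym(\Lambda^2 T^*M\otimes\Lambda^2 T^*M)$ as Theorem \ref{mp2} requires. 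The structural reason is that the relevant elliptic identity comes from $\Delta\T^{[2]}=(\Delta\T)^{[2]}$ (Lemma \ref{laplacesind}), so the right-hand side is naturally a function of the underlying rank-two tensor $\T$, not of $\T^{[2]}$; extending it to a genuine fiberwise map on the whole bundle of symmetric bilinear forms on $\Lambda^2 TM$, and then checking the null-eigenvector condition on arbitrary nonnegative elements (which need not be in the image of the Kulkarni--Nomizu squaring map), is precisely what cannot be done here. So the clean dichotomy you propose to extract from Theorem \ref{mp2} --- either $\phi>0$ everywhere or a null direction everywhere --- is not available, and indeed the weaker form of part (ii) of Theorem \ref{thmC} (isometric immersion only on a nonempty closed set $D$, not globally) is a reflection of this.

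What the paper does instead is purely local at one point: compactness gives a point $x_0$ where the smallest eigenvalue $\rho_0\ge 0$ of $\sind^{[2]}$ with respect to $\Gind$ is minimal; one fixes the tensor $\Phi=\sind-\tfrac{\rho_0}{2}\gind$ and applies the second derivative criterion (Theorem \ref{test}) to $\Phi^{[2]}$ at $x_0$ in the singular direction $e_{m-1}\wedge e_m$. Feeding in the Bochner formula of Lemma \ref{laplacephi} and Lemma \ref{laplacesind} produces inequality (\ref{inequality6}), and the two estimates (Claims 4 and 5, controlling the second fundamental form terms via the first-derivative vanishing from Theorem \ref{test} and the curvature terms via Lemma \ref{lem cd}) force each summand to vanish. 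The case split then is between $\kappa\mu<1$ (giving $f$ constant) and $\kappa\mu=1$ (giving $\lambda_1^2=\dots=\lambda_m^2=1$ at $x_0$, hence $f^*\gn=\gm$ there, and the Einstein/sectional-curvature identities from the vanishing of the curvature terms). If you wish to salvage your approach, you would either need to restrict Theorem \ref{mp2} to an invariant sub-bundle on which the null-eigenvector condition does hold (which is nontrivial because Theorem \ref{mp2} as stated works on the full positive cone), or replace it with the Theorem \ref{test}--based one-point argument as the paper does. As a smaller point, your $\phi$ is defined with $\gm$-norms rather than as $\sind^{[2]}=\sind\varowedge\gind$ in the graph metric $\gind$; the latter is the one whose Laplacian commutes with the $[2]$-operation and for which the singular-value formulas in the paper are computed, so you should use it.
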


In the special case where the manifold $N$ is one-dimensional we have the following stronger theorem:

\begin{mythm}\label{thmE}
Let $(M,\gm)$ and $(N,\gn)$ be two Riemannian manifolds. Suppose that $M$ is compact, $\dim M\ge 2$,
$\Ric_{M}>0$ and that $\dim N=1$. Then any smooth minimal map $f:M\to N$ is constant.
\end{mythm}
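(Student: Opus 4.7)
The plan is to apply Theorem~\ref{mp1} to the angle function of the codimension-one minimal graph $\Sigma:=\Gamma(f)\subset M\times N$, exploiting the Killing vector field coming from the one-dimensional factor $N$.

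Because $N$ is a connected one-dimensional Riemannian manifold, its unit tangent $\partial_t$ is globally defined and parallel, and hence its trivial lift $X$ to $(M\times N,\gk)$ is a Killing vector field of the product metric. I choose the unit normal $\nu$ along $\Sigma$ so that $v:=\gk(X,\nu)$ is positive; an explicit computation in graph coordinates gives $\nu=v(\partial_t-\grad f)$ and $v=1/\sqrt{1+|\grad f|^{2}_{\gm}}$, so $0<v\le 1$, with equality if and only if $\df=0$.

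The next step is to invoke the classical Jacobi-type identity for the pairing of a Killing vector field with the normal of a minimal hypersurface,
$$\Delta_{\Sigma}\,v+\bigl(|A|^{2}+\overline{\Ric}(\nu,\nu)\bigr)\,v=0,$$
where $A$ denotes the second fundamental form of $\Sigma$ and $\overline{\Ric}$ is the Ricci tensor of $\gk$. Because $\dim N=1$ and $\gk$ is a product metric, $\overline{\Ric}(\nu,\nu)=\mric(\nu^{M},\nu^{M})=v^{2}\mric(\grad f,\grad f)\ge 0$ by the hypothesis $\mric>0$.

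To conclude, I apply Theorem~\ref{mp1} on the trivial line bundle $E=\Sigma\times\mathbb{R}$ with its flat connection, with $\mathscr{L}:=\Delta_\Sigma$, fiberwise map $\Psi(s):=\bigl(|A|^{2}+s^{2}\mric(\grad f,\grad f)\bigr)s$, and closed convex set $K:=[m_0,\infty)$, where $m_0:=\min_\Sigma v>0$ is attained at some point $x_0$ by compactness of $M$ (hence of $\Sigma$). The set $K$ is trivially fiber-convex and invariant under parallel transport, and $\Psi$ is inward-pointing at $\{m_0\}=\partial K_{x_0}$ since $\Psi(m_0)\ge 0$. Theorem~\ref{mp1} then forces $v\equiv m_0$; substituting back into the Jacobi equation gives $\Psi(v)\equiv 0$, and because $v>0$ we obtain $|A|^{2}\equiv 0$ and $\mric(\grad f,\grad f)\equiv 0$. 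The strict positivity of $\mric$ yields $\grad f\equiv 0$, and connectedness of $M$ makes $f$ constant.

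The one nontrivial ingredient is the Jacobi identity for $v=\gk(X,\nu)$, which follows from the skew-symmetry of $\overline{\nabla}X$ (Killing condition), the Codazzi equation on $\Sigma$, and the minimality $H\equiv 0$; once this identity is in hand, the remainder is a short scalar application of Theorem~\ref{mp1} (equivalently, of the classical Hopf maximum principle applied to the positive function $v$).
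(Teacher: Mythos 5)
Your argument is correct, and it takes a genuinely different route from the paper's. The paper derives this theorem as a byproduct of the machinery built for Theorems~\ref{thmD} and \ref{thmC}: when $\dim N=1$ the singular values are $0=\lambda_1^2=\dots=\lambda_{m-1}^2=\kappa\le\mu=\lambda_m^2$, so $f$ is automatically strictly area decreasing, and one reruns the tensor computations for $\Delta\Phi_c$ and $\sind^{[2]}$ (Claims~4 and~5 and the associated inequalities from the proof of Theorem~\ref{thmC}) to force $\mu=0$ from $\mric>0$. You instead exploit the codimension-one structure directly: since $N$ is one-dimensional, $\partial_t$ lifts to a parallel (hence Killing) field $X$ on $(M\times N,\gk)$, the angle function $v=\gk(X,\nu)$ of the minimal graph $\Sigma$ satisfies the classical Jacobi identity $\Delta_\Sigma v+\bigl(|A|^2+\overline{\Ric}(\nu,\nu)\bigr)v=0$, and $\overline{\Ric}(\nu,\nu)=v^2\mric(\grad f,\grad f)\ge0$ because $\Ric_N\equiv0$. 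The positive function $v$ is therefore superharmonic on the compact manifold $\Sigma\cong M$, hence constant; substituting back yields $|A|^2\equiv0$ and $\mric(\grad f,\grad f)\equiv0$, and strict positivity of $\mric$ gives $\grad f\equiv0$, so $f$ is constant. This is the standard angle-function argument for minimal hypersurfaces, and it is shorter and more elementary here. What the paper's route buys is uniformity: the same singular-value and Kulkarni--Nomizu machinery powers Theorems~\ref{thmD}, \ref{thmC} and \ref{thmE} in one stroke, whereas the Killing field $X$ exists only because $\dim N=1$. One minor remark: since the Jacobi equation is scalar and $\Sigma$ is compact, you do not actually need the vector-bundle Theorem~\ref{mp1}; integrating $\Delta_\Sigma v\le0$ over $\Sigma$, or invoking the classical scalar strong maximum principle for superharmonic functions, already forces $v$ to be constant, as you yourself observe at the end.
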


As pointed out in the final remarks of Section $3.6$, Theorems \ref{thmD}, \ref{thmC} and \ref{thmE} are optimal
in various ways. We include some examples and remarks concerning the imposed
assumptions at the end of the paper.

The paper is organized as follows. In Section \ref{sec2}
we recall the strong maximum principle for uniformly elliptic
systems of second order by Weinberger-Evans and give the proofs of
Theorems \ref{mp1} and \ref{mp2}. The geometry of
graphs will be treated in Section \ref{sec3}, where we also derive the crucial formula needed
in the proof of Theorems \ref{thmD}, \ref{thmC} and \ref{thmE}.

%%%%%%%%%%%%%%%%%%%%%%%%%%%%%%%%%%%%%%%%%%%%%%%%%%%%%%%%%%%%%%%%%%%%%%%%%

\section{Strong elliptic maximum principles for sections in vector bundles}\label{sec2}
In this section we shall derive strong elliptic maximum principles for smooth sections in
Riemannian vector bundles. The original idea goes back to
the fundamental work of Hamilton \cite{hamilton2,hamilton1} on the Ricci flow, where a strong
parabolic maximum principle for symmetric tensors and weak
parabolic maximum principles for sections in vector bundles were proven.

\subsection{Convex sets}
In this subsection we review the basic definitions about the geometry of convex sets
in euclidean space such as supporting half-spaces, tangent cones and normal vectors.
A brief exposition can be found in the book by Andrews and Hopper \cite[Appendix B]{andrews}.

Recall that a subset $K$ of $\real{n}$ is called \textit{convex} if for any pair of points $z$, $w\in K$,
the segment
$$\mathcal{E}_{z,w}:=\{tz+(1-t)w\in\real{n}:t\in(0,1)\}$$
is contained in $K$. The set $K$ is said to be \textit{strictly convex}, if for
any pair $z,w\in K$ the segment $\mathcal{E}_{z,w}$ belongs to the interior of $K$.

A convex set $K\subset\real{n}$ may have non-smooth boundary. Hence, there is no well-defined tangent
or normal space of $K$ in the classical sense. However, there is a way to generalize these important
notions for closed convex subsets of $\real{n}$. This difficulty can be overcome by using the property
that points lying outside of the given set can be separated from the set itself by half-spaces. This
property, leads to the notion of generalized tangency.

Let $K$ be a closed convex subset of the euclidean space $\real{n}$. A supporting half-space of the set $K$ is
a closed half-space of $\real{n}$ which contains $K$ and has points of $K$ on its boundary. A supporting
hyperplane of $K$ is a hyperplane which is the boundary of a supporting half-space of $K$.
The \textit{tangent cone} $C_{y_0}K$ of $K$
at $y_0\in \partial K$ is defined as the intersection of all supporting half-spaces of $K$ that contain $y_0$.

We may also introduce the notion of normal vectors
to the boundary of a closed convex set. Let $K\subset\real{n}$ be a closed convex subset and $y_0\in\partial K$. Then
\begin{enumerate}[(i)]
\item
A non-zero vector $\xi$ is called \textit{normal vector} of $\partial K$ at $y_0$, if $\xi$ is normal to a
supporting hyperplane of $K$ passing through $y_0$. This normal vector is called \textit{inward pointing}, if it points into the half-space containing the set $K$.
\medskip
\item
A vector $\eta$ is called \textit{inward pointing} at $y_0\in\partial K$, if
$$\langle\xi,\eta\rangle\ge 0$$
for any inward pointing normal vector $\xi$ at $y_0$. Here, $\langle\cdot,\cdot\rangle$ denotes the usual inner product in $\real{n}$.
\end{enumerate}

\subsection{Maximum principles for systems}
In \cite{weinberger}, H. Weinberger  established a strong maximum principle for vector valued
maps with values in a convex set $K\subset\real{n}$ whose boundary $\partial K$ satisfies regularity
conditions that he called \textquotedblleft{\textit{slab conditions}}\textquotedblright. Inspired by
the ideas of Weinberger, X. Wang in \cite{wang4} gave a geometric proof of the strong maximum principle,
in the case where the boundary of $K$ is of class $C^2$. The idea of Wang was to apply
the classical maximum principle of Hopf to the function $d(u):D\to\real{}$, whose value at $x$ is
equal to the distance of $u(x)$ from the boundary $\partial K$ of $K$. Very recently, L.C. Evans \cite{evans1}
was able to remove all additional regularity requirements on the boundary of the convex set $K$ by showing that even
if $d(u)$ is not twice differentiable, it is still a viscosity super-solution of an appropriate partial
differential equation. The argument of Evans is completed by applying a strong maximum principle
due to F. Da Lio \cite{dalio} for viscosity super-solutions of partial differential equations.

\begin{theorem}{\bf (Weinberger-Evans).}\label{weinberger}
Let $K$ be a closed, convex set of $\real{n}$ and $u:D\subset\real{m}\to K\subset\real{n}$ a
solution of the uniformly elliptic system of partial differential equations
$$(\mathscr{L}u)(x)+\Psi(x,u(x))=0,\quad x\in D,$$
where $D$ is a domain of $\real{m}$, $\Psi:D\times\real{n}\to\real{n}$ is a continuous map that is locally Lipschitz continuous in the second variable and $\mathscr{L}$ is a uniformly elliptic operator given in (\ref{elliptic}). Suppose that
\begin{enumerate}[(i)]
\item
there is a point $x_0$ in the interior of $D$ such that
$u(x_0)\in\partial K$,
\medskip
\item
for any $(x,y)\in D\times\partial K$, the vector $\Psi(x,y)$ points into $K$ at the point $y\in\partial K$.
\end{enumerate}
Then $u(x)\in\partial K$ for any $x\in D$. If $\partial K$ is strictly convex at $u(x_0)$, then $u$ is constant.
\end{theorem}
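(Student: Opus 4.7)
My plan is to reduce the vector-valued problem to a scalar strong maximum principle by composing $u$ with the distance function to $\partial K$. Define
\[
v(x):=\dist\bigl(u(x),\partial K\bigr),
\]
so that $v\geq 0$ on $D$ and $v(x_{0})=0$ by hypothesis. The goal becomes to show that this non-negative scalar function, which attains its minimum at an interior point, must vanish identically; once $v\equiv 0$ is established, the final clause about strict convexity will follow from a second, simpler application of the same idea to a linear supporting functional.

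To derive a scalar differential inequality for $v$, I would first pretend $\partial K$ is of class $C^{2}$ near $u(x_{0})$, so that $d_{K}:=\dist(\cdot,\partial K)$ is $C^{2}$ there, and use the chain rule to obtain
\[
\mathscr{L}v=\sum_{i,j}a^{ij}\bigl\langle D^{2}d_{K}(u)\,\partial_{i}u,\partial_{j}u\bigr\rangle+\bigl\langle\nabla d_{K}(u),\mathscr{L}u\bigr\rangle.
\]
Convexity of $K$ makes $d_{K}$ concave on $K$, so $D^{2}d_{K}\leq 0$ and the first term is non-positive because $(a^{ij})$ is positive definite. Substituting $\mathscr{L}u=-\Psi(u)$ and writing $\Psi(u(x))=\Psi(Pu(x))+\bigl[\Psi(u(x))-\Psi(Pu(x))\bigr]$, where $P$ denotes the nearest-point projection onto $\partial K$, the inward-pointing hypothesis gives $\langle\nabla d_{K}(Pu(x)),\Psi(Pu(x))\rangle\geq 0$, while the local Lipschitz property of $\Psi$ bounds the remainder by $L\,v(x)$. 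Altogether $v$ satisfies a scalar uniformly elliptic inequality of the form $\mathscr{L}v\leq L\,v$ on $D$.

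With this inequality in hand I would invoke Hopf's classical strong maximum principle applied to $v\geq 0$ with interior zero at $x_{0}$ to conclude $v\equiv 0$, and hence $u(D)\subset\partial K$. For the second conclusion, suppose $\partial K$ is strictly convex at $y_{0}:=u(x_{0})$ and let $\xi$ be an inward unit normal to a supporting hyperplane $H$ at $y_{0}$. The scalar function $w(x):=\langle\xi,u(x)-y_{0}\rangle$ is non-negative and vanishes at $x_{0}$; the inward-pointing property of $\Psi$ on $\partial K$ combined with the Lipschitz estimate again yields $\mathscr{L}w\leq C\,w$, so Hopf forces $w\equiv 0$. Consequently $u(D)\subset H\cap K$, and strict convexity at $y_{0}$ locally collapses this intersection to $\{y_{0}\}$, so by connectedness $u\equiv y_{0}$.

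The main obstacle is the regularity of $\partial K$. For a general closed convex set, $d_{K}$ is only Lipschitz and semi-concave, so the Hessian formula and the smooth projection $P$ used above are unavailable in the classical sense. Following Evans, the cure is to show that $v$ is a viscosity super-solution of $\mathscr{L}v-L\,v\leq 0$: for any $C^{2}$ test function $\varphi$ touching $v$ from above at an interior point, I would use elements of the superdifferential of $d_{K}$ at $u(x)$, the concavity of $d_{K}$, and the inward-pointing condition to generate the required pointwise inequality on $\varphi$. A strong maximum principle for viscosity super-solutions, such as Da Lio's theorem cited in the excerpt, then closes the argument. Verifying the viscosity super-solution property at the points where $d_{K}$ fails to be differentiable is the technical heart of the proof.
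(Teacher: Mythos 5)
The paper does not give its own proof of this theorem: it is quoted from Weinberger and Evans, and the paragraph preceding the statement sketches exactly the route you take — Wang's reduction to the scalar function $d\bigl(u(\cdot),\partial K\bigr)$, Evans's observation that this function is a viscosity supersolution even when $\partial K$ is not $C^2$, and Da Lio's strong maximum principle for viscosity supersolutions. Your treatment of the principal conclusion $u(D)\subset\partial K$ is therefore fully in line with the cited strategy, and the ingredients you identify (concavity of $d_K$, nearest-point projection $P$ onto $\partial K$, the Lipschitz bound $|\Psi(x,u(x))-\Psi(x,Pu(x))|\le L\,v(x)$, and the viscosity fix when $\partial K$ is only Lipschitz) are the right ones.

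Your argument for the strict-convexity clause, however, has a genuine gap. Once $u(D)\subset\partial K$ is established, $Pu(x)=u(x)$, so the decomposition of part one is vacuous for the new scalar $w(x)=\langle\xi,u(x)-y_0\rangle$. The inward-pointing hypothesis pairs $\Psi(x,u(x))$ against normals at $u(x)$, but $\xi$ is a normal to $K$ only at points of the supporting hyperplane $H$, which by strict convexity is precisely $\{y_0\}$. Thus the only estimate available is $\langle\xi,\Psi(x,u(x))\rangle\ge \langle\xi,\Psi(x,y_0)\rangle-L\,|u(x)-y_0|\ge -L\,|u(x)-y_0|$, and for a $C^2$ strictly convex boundary one has $|u(x)-y_0|\sim\sqrt{w(x)}$ as $u(x)\to y_0$ along $\partial K$, not $|u(x)-y_0|\lesssim w(x)$. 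Consequently the coefficient in the would-be inequality $\mathscr{L}w\le C\,w$ blows up like $w^{-1/2}$ at the minimum and Hopf's strong maximum principle does not apply. Your conclusion that $u(D)\subset H\cap K=\{y_0\}$ would indeed follow from $w\equiv 0$, but $w\equiv 0$ itself requires a different device — for instance a suitably constructed strictly concave auxiliary function in place of the linear functional $\langle\xi,\cdot-y_0\rangle$ — rather than the linear-in-$w$ scalar inequality you propose.
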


\begin{remark}
The above maximum principle is not valid without the convexity assumption. We illustrate this
by an example. Let
$$D=\{(x,y)\in\real{2}:x^2+y^2<1\}$$
be the unit open disc in $\real{2}$ and let $h:\partial D\to\Gamma\subset\real{2}$ be a
continuous function that maps $\partial D$ onto the upper semicircle
$$\Gamma=\{(x,y)\in\real{2}:x^2+y^2=1\,\text{and}\,y\ge 0\}.$$
Denote now by $u:D\to\real{2}$ the solution of the Dirichlet problem with boundary data
given by the function $h$. Let us examine the image of the harmonic map $u$.
We claim at first that the image of $u$ is contained in the convex hull
$\mathcal{C}(\Gamma)$ of the upper semicircle. That is,
$$K:=u\left(\overline{D}\right)\subset\,\mathcal{C}(\Gamma)=\{(x,y)\in\real{2}:x^2+y^2\le 1\,\text{and }y\ge 0\}.$$
Arguing indirectly, let us assume that this is not true. The convex hull $\mathcal{C}(K)$
of $K$ contains $\mathcal{C}(\Gamma)$. Since $K$ is compact, the set
$\mathcal{C}(K)$ is also compact. Consequently, there exist a point $(x_0,y_0)$ in $D$ such that
$u(x_0,y_0)\in\partial\,\mathcal{C}(K)$ and $u(x_0,y_0)\not\in\partial\,\mathcal{C}(\Gamma)$.
Then, from the maximum principle of Weinberger-Evans we deduce that $u(x,y)\in\partial\,\mathcal{C}(K)$ for any
$(x,y)\in D$. This contradicts with the boundary data imposed by the Dirichlet condition.
Therefore, $K$ is contained in $\mathcal{C}(\Gamma)$. From Theorem \ref{weinberger},
we conclude that there is no common point of $K$ with the $x$-axes. Hence, $K$ is not convex.
The same argument yields that there is no point of $D$ which is mapped to $\Gamma$ via $u$. On
the other hand, because $K$ is compact, there are infinitely many points of $D$ which are mapped
to the boundary of $K$. Furthermore, we claim that the set $K$ has non-empty interior.
In order to show this, suppose to the contrary that $K\setminus\partial K=\emptyset$. Then,
$$\rank(\operatorname{d}\hspace{-2pt}u)\le 1$$
which implies that the closure of the set $u(D)$ is a continuous curve $L$ joining the points
$(-1,0)$ and $(1,0)$. But then, the continuity of $u$ leads to a contradiction. Indeed, for any
sequence $\{p_{k}\}_{k\in\mathbb{N}}$ of points of $D$ converging to a point $p\in u^{-1}(0,1)$, we have
$\lim u(p_k)\neq (0,1)$.
\end{remark}

\subsection{Maximum principles for sections in vector bundles}
Here we give
the analogue version of the Weinberger-Evans strong maximum principle for sections
in Riemannian vector bundles.
Our approach is inspired by ideas developed by Weinberger \cite{weinberger} and Hamilton \cite{hamilton2,hamilton1}.

For the proof of the strong maximum principle we will use a beautiful result
due to C. B\"{o}hm and B. Wilking \cite[Lemma 1.2, p. 670]{bohm}.

\begin{lemma}{\bf (B\"{o}hm-Wilking).}\label{wilking}
Suppose that $M$ is a Riemannian manifold and that $(E,\pi,M)$ is a Riemannian vector bundle over
$M$ equipped with a metric connection. Let $K$ be a closed and fiber-convex subset
of the bundle $E$ that is invariant under parallel transport. If $\phi$ is a smooth section with
values in $K$ then, for any $x\in M$ and $v\in T_xM$, the Hessian
$$\nabla^{2}_{v,v}\phi=\nabla_{v}\nabla_v\phi-\nabla_{\nabla_{v}v}\phi$$
belongs to the tangent cone of $K_x$ at the point $\phi(x)$.
\end{lemma}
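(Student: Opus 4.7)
The plan is to pull everything back by parallel transport to the single fixed fiber $E_x$, where we can use the classical characterization of convex sets by supporting half-spaces. If $\phi(x)$ lies in the interior of $K_x$ there are no supporting hyperplanes of $K_x$ through $\phi(x)$ and the tangent cone coincides with $E_x$, so the assertion is vacuous. Hence we may assume $\phi(x)\in\partial K_x$.

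Fix $v\in T_xM$ and let $\gamma\colon(-\epsilon,\epsilon)\to M$ be the geodesic with $\gamma(0)=x$ and $\dot\gamma(0)=v$. Denote by $\tau_t\colon E_x\to E_{\gamma(t)}$ parallel transport along $\gamma$; since $\nabla$ is metric, $\tau_t$ is a linear isometry of fibers. Define
\begin{equation*}
\widetilde\phi(t):=\tau_t^{-1}\bigl(\phi(\gamma(t))\bigr)\in E_x.
\end{equation*}
Applying the invariance of $K$ under parallel transport to $\gamma|_{[0,t]}$ (for $t>0$) and to the reversed curve $s\mapsto \gamma(-s)$ (for $t<0$) one obtains $\tau_t(K_x)=K_{\gamma(t)}$, so $\widetilde\phi(t)\in K_x$ for every $t\in(-\epsilon,\epsilon)$. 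The standard relation between the covariant derivative along $\gamma$ and differentiation in the trivialized fiber $E_x$ gives $\widetilde\phi'(0)=\nabla_v\phi$ and $\widetilde\phi''(0)=\nabla_{\dot\gamma}\nabla_{\dot\gamma}\phi\bigr|_{t=0}$. Because $\gamma$ is a geodesic, $\nabla_{\dot\gamma}\dot\gamma=0$, and so the correction term in the Hessian vanishes at $x$:
\begin{equation*}
\widetilde\phi''(0)=\nabla_v\nabla_v\phi-\nabla_{\nabla_v v}\phi=\nabla^2_{v,v}\phi.
\end{equation*}

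Now let $\xi\in E_x$ be any inward-pointing normal to a supporting hyperplane of $K_x$ at $\phi(x)$, so that $\langle y-\phi(x),\xi\rangle_{E_x}\ge 0$ for all $y\in K_x$. The smooth real-valued function
\begin{equation*}
g(t):=\bigl\langle\widetilde\phi(t)-\phi(x),\xi\bigr\rangle_{E_x}
\end{equation*}
is non-negative on $(-\epsilon,\epsilon)$ and vanishes at $t=0$, so $t=0$ is an interior minimum and Taylor's formula forces $g''(0)\ge 0$. Consequently $\langle\nabla^2_{v,v}\phi,\xi\rangle_{E_x}\ge 0$ for every such $\xi$, which is exactly the condition that $\nabla^2_{v,v}\phi$ lies in the intersection of all supporting half-spaces at $\phi(x)$, i.e.\ in $C_{\phi(x)}K_x$.

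The only genuinely delicate point is the identification $\widetilde\phi''(0)=\nabla^2_{v,v}\phi$: it requires the routine but careful verification that parallel transport of the second covariant derivative along a geodesic reproduces the tensorial Hessian, which is exactly why the geodesic choice of $\gamma$ is essential (so that $\nabla_{\nabla_v v}\phi$ drops out). The remainder of the argument is elementary once the curve $\widetilde\phi$ has been placed inside the convex set $K_x\subset E_x$ by the invariance hypothesis.
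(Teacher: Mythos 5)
The paper does not supply a proof of this lemma; it simply cites B\"ohm--Wilking \cite[Lemma~1.2]{bohm}, so there is no in-paper argument to compare against. Your proof is correct and is, as far as I can tell, essentially the argument given in the cited source: pull everything back to the fixed fiber $E_x$ by parallel transport along a geodesic through $x$, observe that $\widetilde\phi(t)=\tau_t^{-1}\phi(\gamma(t))$ stays in the convex set $K_x$ by parallel invariance, compute that the ordinary second derivative $\widetilde\phi''(0)$ equals the intrinsic Hessian $\nabla^2_{v,v}\phi$ precisely because $\gamma$ is a geodesic (so the correction term $\nabla_{\nabla_v v}\phi$ drops out), and then apply the elementary second-derivative test to $g(t)=\langle\widetilde\phi(t)-\phi(x),\xi\rangle$ for each inward-pointing normal $\xi$ of $K_x$ at $\phi(x)$. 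All the pieces are in place: the key identity $\frac{d}{dt}\bigl(\tau_t^{-1}\sigma(t)\bigr)=\tau_t^{-1}\frac{D\sigma}{dt}(t)$, applied twice along the geodesic, gives $\widetilde\phi''(0)=\nabla^2_{v,v}\phi$, and the conclusion $g''(0)\ge0$ is exactly the statement that $\nabla^2_{v,v}\phi$ lies in $C_{\phi(x)}K_x$, interpreted (as the paper does in practice, e.g.\ in the proof of Theorem~B) as the cone of inward-pointing directions.

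Two small presentational remarks. First, the justification that $\tau_t(K_x)=K_{\gamma(t)}$ requires applying parallel invariance to the segment from $x$ to $\gamma(t)$ and to the reversed segment from $\gamma(t)$ back to $x$, for \emph{every} $t$ (positive or negative); the needed containment for $\widetilde\phi(t)\in K_x$ is in each case the reversed one, so the way you split the two cases by the sign of $t$ is not quite the point, though the conclusion stands. Second, the sentence deriving $\widetilde\phi''(0)=\nabla^2_{v,v}\phi$ is a little circular as written; it is cleaner to say directly that $\widetilde\phi''(0)=\frac{D^2(\phi\circ\gamma)}{dt^2}(0)=\nabla^2_{\dot\gamma,\dot\gamma}\phi+\nabla_{\frac{D\dot\gamma}{dt}}\phi=\nabla^2_{v,v}\phi$ since $\frac{D\dot\gamma}{dt}=0$. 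Neither of these affects the validity of the argument.
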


The following result is an immediate consequence of the above lemma.

\begin{lemma}\label{wilking2}
Suppose that $M$ is a Riemannian manifold and that $(E,\pi,M)$ is a Riemannian vector bundle over
$M$ equipped with a metric connection. Let $K$ be a closed and fiber-convex subset
of $E$ that is invariant under parallel transport. If $\phi$ is a smooth section with
values in $K$ then, for any $x\in M$, the vector $(\mathscr{L}\phi)(x)$ belongs to the tangent cone
$C_{\phi(x)}K_x$, for any operator $\mathscr{L}$ of the form given in (\ref{dast}).
\end{lemma}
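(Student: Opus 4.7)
The plan is to decompose $(\mathscr{L}\phi)(x)$ into its second-order and first-order parts and verify separately that each stays in the closed convex cone $C_{\phi(x)}K_x$. A tangent cone to a convex set is itself a closed convex cone, hence closed under sums and positive scalar multiples; so a summand-by-summand verification will suffice.

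For the second-order piece I would diagonalize the symmetric, positive definite tensor $a(x)$, writing
\begin{equation*}
a(x)=\sum_{k=1}^{m}\lambda_k\,v_k\otimes v_k,\qquad \lambda_k>0,
\end{equation*}
in some $\gm$-orthonormal basis $\{v_k\}$ of $T_xM$. Because the Hessian $\nabla^2\phi$ is a tensor, it is $C^\infty$-bilinear in its two slots, which yields
\begin{equation*}
\sum_{i,j=1}^{m}a^{ij}(x)\,\nabla^{2}_{e_i,e_j}\phi(x)=\sum_{k=1}^{m}\lambda_k\,\nabla^{2}_{v_k,v_k}\phi(x).
\end{equation*}
The B\"ohm-Wilking lemma places every summand on the right in $C_{\phi(x)}K_x$, and the $\lambda_k$ are positive, so the sum remains in the cone.

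The subtler step is the first-order piece $\sum_{j}b^{j}(x)\nabla_{e_j}\phi(x)$, whose coefficients $b^{j}$ can have either sign. Here I would exploit the parallel transport invariance of $K$ as follows. Fix $v\in T_xM$ and a smooth curve $\gamma:(-\epsilon,\epsilon)\to M$ with $\gamma(0)=x$ and $\gamma'(0)=v$, and let $P_t\colon E_{\gamma(t)}\to E_x$ denote parallel transport along $\gamma$. Since $K$ is invariant under parallel transport, $P_t$ carries $K_{\gamma(t)}$ into $K_x$, and therefore the curve $c(t):=P_t\bigl(\phi(\gamma(t))\bigr)$ lies in $K_x$ with $c(0)=\phi(x)$ and $c'(0)=(\nabla_v\phi)(x)$. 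A tangent vector at $\phi(x)$ to a smooth curve in the convex set $K_x$ belongs to $C_{\phi(x)}K_x$, so $(\nabla_v\phi)(x)\in C_{\phi(x)}K_x$. Repeating the argument with the reversed curve $t\mapsto\gamma(-t)$ shows that $-(\nabla_v\phi)(x)$ is in $C_{\phi(x)}K_x$ as well; thus $(\nabla_v\phi)(x)$ lies in the lineality space of the cone, i.e., in the largest linear subspace it contains.

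Combining both steps, $(\mathscr{L}\phi)(x)$ is the sum of an element of $C_{\phi(x)}K_x$ and a vector in its lineality space, hence is itself in $C_{\phi(x)}K_x$. The main obstacle in this argument is the first-order term: one must promote the one-sided tangency furnished by a curve in $K_x$ to a two-sided statement, and this is exactly where the reversed-curve trick together with the parallel transport invariance of $K$ is essential.
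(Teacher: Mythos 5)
Your proof is correct, and it fills in precisely what the paper leaves unproved: the paper merely asserts that Lemma 2.3 is ``an immediate consequence'' of the B\"ohm--Wilking lemma without supplying an argument. Your decomposition is the right one: diagonalizing $a(x)$ and invoking the tensoriality of the Hessian reduces the second-order part to a positive combination of terms $\nabla^2_{v_k,v_k}\phi$, which B\"ohm--Wilking places in the cone; and the first-order term, which B\"ohm--Wilking alone does not control, is handled by the parallel-transport curve $t\mapsto P_t(\phi(\gamma(t)))$ in $K_x$ together with the reversed-curve observation showing $\pm(\nabla_v\phi)(x)$ both lie in $C_{\phi(x)}K_x$, after which closure of the convex cone under sums finishes the proof.
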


Now we derive the proof of the strong elliptic maximum principle formulated in Theorem \ref{mp1}.

{\bf Proof of Theorem \ref{mp1}.}
Let $\{\phi_1,\dots,\phi_k\}$ be a geodesic
orthonormal frame field of smooth sections in $E$, defined in a sufficiently small neighborhood
$U$ of a local trivialization around $x_0\in M$. Hence,
$$\phi=\sum_{i=1}^{k}u_{i}\phi_{i}$$
where $u_{i}:U\to\real{}$, $i\in\{1,\dots,k\}$, are smooth functions.

With respect to this frame we have
\begin{eqnarray*}
\mathscr{L}\phi&=&\sum^k_{i=1}\Big\{\mathscr{L} u_{i}+\big(\text{gradient terms of }u_i\big)
+\sum\limits^{k}_{j=1}u_{j}\,\gE(\mathscr{L}\phi_j,\phi_i)\Big\}\phi_i \\
&=&-\sum^{k}_{i=1}\gE(\Psi(\phi),\phi_i)\phi_i
\end{eqnarray*}

Therefore, the map $u:U\to\real{k}$, $u=(u_1,\dots,u_k)$, satisfies a uniformly elliptic system of second order of the form
\begin{equation}\label{pde}
\mathscr{\tilde L}u+\Phi(u)=0,
\end{equation}
where here $\Phi:\real{k}\to\real{k}$,
$$\Phi:=(\Phi_1,\dots,\Phi_k),$$
is given by
\begin{equation}\label{newphi}
\Phi_{i}(u)=\gE\left(\Psi\left(\tsum\limits_{j=1}^k u_j\phi_j\right)+\tsum\limits_{j=1}^{k}u_j\mathscr{L}\phi_j\,,\phi_i\right),
\end{equation}
for any $i\in\{1,\dots,k\}$.

Consider the convex set
$$\mathcal{K}:=\{(y_1,\dots,y_k)\in\real{k}:\tsum\limits_{i=1}^{k}y_{i}\phi_{i}(x_0)\in K_{x_0}\}.$$
\textit{{\bf Claim 1:} For any point $x\in U$ we have $u(x)\in\mathcal{K}$}.

Indeed, fix a point $x\in U$ and
let $\gamma:[0,1]\to U$ be the geodesic curve joining the points $x$ and $x_0$. Denote by $\theta$ the
parallel section which is obtained by the parallel transport of $\phi(x)$ along the geodesic
$\gamma$. Then,
$$\theta\circ\gamma=\sum_{i=1}^{k}y_{i}\,\phi_{i}\circ\gamma,$$
where $y_{i}:[0,1]\to\real{}$, $i\in\{1,\dots,k\}$, are smooth functions. Because,
$\theta$ and $\phi_{i}$, $i\in\{1,\dots,k\}$ are parallel along $\gamma$, it follows
that
\begin{eqnarray*}
0=\nabla_{\partial_{t}}(\theta\circ\gamma)=\sum_{i=1}^{k}y'_{i}(t)\phi_{i}(\gamma(t)).
\end{eqnarray*}
Hence, $y_{i}(t)=y_{i}(0)=u_{i}(x),$ for any $t\in [0,1]$ and $i\in\{1,\dots,k\}$. Therefore,
$$\theta(\gamma(1))=\theta(x_0)=\sum_{i=1}^{k}u_{i}(x)\phi_{i}(x_0).$$
Since by our assumptions $K$ is invariant under parallel transport, it follows that
$\theta(x_0)\in K_{x_0}$. Hence, $u(U)\subset\mathcal{K}$ and this proves Claim 1.

\textit{{\bf Claim 2:} For any  $y\in\partial \mathcal{K}$ the vector $\Phi(y)$ as
defined in $(\ref{newphi})$ points
into $\mathcal{K}$ at $y$.}

First note that the boundary of each slice $K_x$ is invariant under parallel transport.
From (\ref{newphi}) we deduce that it suffices to prove that  both terms appearing on
the right hand side of (\ref{newphi}) point into $\mathcal{K}$. The first term points into
$\mathcal{K}$ by assumption on $\Psi$. The second term is inward pointing due to
Lemma \ref{wilking2} by B\"ohm and Wilking.
This completes the proof of  Claim 2.

The solution of the uniformly second order elliptic partial differential system $(\ref{pde})$ satisfies
all the assumptions of Theorem \ref{weinberger}. Therefore, because $u(x_0)\in\partial\mathcal{K}$
it follows that $u(U)$ is contained in the boundary $\partial\mathcal{K}$ of $\mathcal{K}$. Consequently,
$\phi(x)\in\partial K$ for any $x\in U$. Since $M$ is connected, we deduce that $\phi(M)\subset\partial K$.

Note, that if $\mathcal{K}$ is additionally strictly convex at
$u(x_0)$, then the map $u$ is constant. This
implies that
$$\phi(x)=\sum_{i=1}^{k}u_i(x_0)\phi_i(x)$$
for any $x\in U$. Consequently, $\phi$ is a parallel section taking all its values in $\partial K$.
This completes the proof of Theorem \ref{mp1}. \hfill{$\square$}

\begin{remark}
Theorem \ref{mp1} implies the following: Suppose the fibers of $K$ are cones with vertices at $0$
and that they are strictly convex at $0$.  If $\phi(x)=0$ in a point $x\in M$, then $\phi$ vanishes everywhere.
\end{remark}

We can now prove Theorem \ref{mp2}.

{\bf Proof of Theorem \ref{mp2}.}
Let $K$ be the set of all non-negative definite symmetric $2$-tensors on $M$, i.e.
$$K:=\{\vartheta\in\sym(E^*\otimes E^*):\vartheta\ge 0\}.$$
Each fiber $K_{x}$ is a closed convex cone that is strictly convex at $0$. Moreover, $K$ is invariant under parallel transport.
The set of all boundary points of $K_x$ is given by
$$\partial K_{x}=\{\vartheta\in K_{x}:\exists\text{ a non-zero }v\in T_{x}M\text{ such that }
\vartheta(v,v)=0\}.$$
It is a classical fact in Convex Analysis (see for example the book \cite[Appendix B]{andrews}), that the tangent cone of
$K_{x}$ at a point $\vartheta$ of its boundary is given by
$$C_{\vartheta}K_{x}=\{\psi\in\sym(E_{x}^*\otimes E_{x}^*):
\psi(v,v)\ge 0,\forall\, v\in E_x \text{ with }\vartheta(v,v)=0\}.$$
Thus $\psi$ is in the tangent cone of $K_x$ at $\vartheta$, if and only if it satisfies the null-eigenvector
condition of Hamilton given in Definition \ref{def null}.
By Theorem \ref{mp1} we immediately get the proof of Theorem \ref{mp2}.
\hfill$\square$

\subsection{A second derivative criterion for symmetric $2$-tensors}
For $\phi\in\sym(E^*\otimes E^*)$ a real number $\lambda$ is called \textit{eigenvalue} of $\phi$ with
respect to $\gE$ at the point $x\in M$, if there exists a non-zero vector $v\in E_{x}$, such that
\begin{equation*}
\phi(v,w)=\lambda\gE(v,w),
\end{equation*}
for any $w\in E_{x}$. The linear subspace $\eig_{\lambda,\phi}(x)$ of $E_x$ given by
\begin{equation*}
\eig_{\lambda,\phi}(x):=\{v\in E_x:\phi(v,w)=\lambda\gE(v,w),\, \text{for any}\,w\in E_x\},
\end{equation*}
is called the \textit{eigenspace} of $\lambda$ at $x$. Since $\phi$ is symmetric it admits $k$ real
eigenvalues $\lambda_1(x),\dots,\lambda_k(x)$ at each point $x\in M$. We will always arrange the
eigenvalues such that $\lambda_{1}(x)\leq\cdots\leq\lambda_{k}(x)$.

\begin{theorem}{\bf(Second Derivative Criterion)}\label{test}
Let $(M,\gm)$ be a Riemannian manifold and $(E,\pi,M)$ a Riemannian vector bundle
of rank $k$ over the manifold $M$ equipped with a metric connection $\nabla$.
Suppose that $\phi\in\sym(E^*\otimes E^*)$ is a smooth symmetric $2$-tensor. If the
biggest eigenvalue $\lambda_k$ of $\phi$ admits a local maximum $\lambda$ at an interior
point $x_0\in M$, then
$$(\nabla\phi)(v,v)=0\quad\text{and}\quad(\mathscr{L}\phi)(v,v)\le 0,$$
for all vectors $v$ in the eigenspace of $\lambda$ at $x_0$ and for all uniformly elliptic
second order operators $\mathscr{L}$.
\end{theorem}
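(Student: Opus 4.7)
My plan is to reduce the (\emph{a priori} non-smooth) eigenvalue statement to the classical second derivative test for a smooth scalar function. I would fix a unit eigenvector $v\in \eig_{\lambda,\phi}(x_0)$ and extend it to a smooth local section $V$ of $E$ by parallel transport along the radial geodesics of $M$ emanating from $x_0$ in a normal coordinate ball $U$. Because $\nabla$ is a metric connection, $\gE(V,V)\equiv 1$ on $U$; because $V$ is parallel along every radial geodesic, $(\nabla_w V)(x_0)=0$ for every $w\in T_{x_0}M$. Setting $F(x):=\phi_x(V(x),V(x))$, the variational characterization of $\lambda_k$ together with $\gE(V,V)\equiv 1$ gives
\[
F(x)\;\le\;\lambda_k(x)\;\le\;\lambda\;=\;F(x_0),
\]
so $F$ attains a local maximum at $x_0$. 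The scalar second derivative test then yields $(\nabla F)(x_0)=0$ and $(\nabla^2 F)(x_0)\le 0$ as a symmetric bilinear form on $T_{x_0}M$.

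\textbf{The derivative computation.} Working in a local orthonormal frame $\{e_i\}$ of $TM$ that is geodesic at $x_0$, I would expand $\nabla_w F$ and $\nabla^2_{e_i,e_j}F$ via the Leibniz rule for $\nabla$. The vanishing of $\nabla V$ at $x_0$ kills all first-order cross terms in both expressions. From $(\nabla F)(x_0)=0$ one obtains $(\nabla_w\phi)(v,v)=0$ for every $w\in T_{x_0}M$, which is the first claim. For the Hessian, after the cross terms drop, what remains at $x_0$ is
\[
(\nabla^2_{e_i,e_j}F)(x_0)\;=\;(\nabla^2_{e_i,e_j}\phi)(v,v)\;+\;2\,\phi\bigl(\nabla^2_{e_i,e_j}V,\,v\bigr).
\]
To eliminate the second (spurious) term I would use the eigenvector property to replace $\phi(\,\cdot\,,v)$ by $\lambda\,\gE(\,\cdot\,,v)$, and then exploit $\gE(V,V)\equiv 1$: applying $\nabla^2_{e_i,e_j}$ to this constant and invoking $(\nabla V)(x_0)=0$ forces $\gE(\nabla^2_{e_i,e_j}V,v)=0$ at $x_0$. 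Hence $(\nabla^2_{e_i,e_j}F)(x_0)=(\nabla^2_{e_i,e_j}\phi)(v,v)$, and combining with the first-order identity yields $(\mathscr{L}F)(x_0)=(\mathscr{L}\phi)(v,v)(x_0)$. Since the tensor $a$ is positive definite and $\nabla^2 F(x_0)\le 0$, this quantity is nonpositive, which is the second claim.

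\textbf{Main obstacle.} The only genuinely nontrivial step is the cancellation of the spurious term $\phi(\nabla^2 V,v)$ in the Hessian of $F$. It requires the joint use of the eigenvector property of $v$, the metric compatibility of $\nabla$, and the unit extension $V$ supplied by radial parallel transport; none of these three ingredients can be dropped. Once this cancellation is in place, the rest is a routine scalar maximum principle argument that, crucially, does not rely on any regularity of the function $\lambda_k$ itself.
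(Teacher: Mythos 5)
Your proposal is correct and follows essentially the same route as the paper: scalarize by evaluating the tensor on a smooth section $V$ extending the eigenvector $v$ with $(\nabla V)(x_0)=0$, observe that the resulting function attains a local maximum at $x_0$, and use the eigenvector identity to kill the spurious Hessian term. The only (cosmetic) difference is bookkeeping: the paper works with $\Sind:=\phi-\lambda\gE$ and kills the spurious term via $\Sind(v,\cdot)=0$ directly, whereas you keep $\phi$, impose $\gE(V,V)\equiv 1$ by radial parallel transport, and derive $\gE(\nabla^2 V,v)(x_0)=0$ from differentiating the unit-norm constraint; both arguments are equivalent.
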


{\bf Remark.} Replacing $\phi$ by $-\phi$ in Theorem \ref{test} gives a similar result for
the smallest eigenvalue $\lambda_1$ of $\phi$.

\begin{proof}
Let $v\in \eig_{\lambda,\phi}(x_0)$ be a unit vector and $V\in\Gamma(E)$ a smooth section such that
\begin{equation*}
V(x_0)=v\quad \text{and}\quad(\nabla V)(x_0)=0.
\end{equation*}
Define the symmetric $2$-tensor $\Sind$ given by $\Sind:=\phi-\lambda\gE$.
From our assumptions, the symmetric $2$-tensor $\Sind$ is non-positive definite in a small neighborhood
of $x_0$. Moreover, the biggest eigenvalue of $\Sind$ at $x_0$ equals $0$. Consider the smooth function
$f:M\to\real{}$, given by
\begin{equation*}
f(x):=\Sind(V(x),V(x)).
\end{equation*}
The function $f$ is non-positive in the same neighborhood around $x_0$
and attains a local maximum in an interior point $x_0$. In particular,
\begin{equation*}
f(x_0)=0,\quad \df(x_0)=0\quad\text{and}\quad(\mathscr{L}f)(x_0)\le 0.
\end{equation*}
Consider a local orthonormal frame field $\{e_1,\dots,e_m\}$ with respect to $\gm$ defined in a neighborhood
of the point $x_0$ and assume that the expression of $\mathscr{L}$ with respect to this frame is
\begin{equation*}
\mathscr{L}=\sum_{i,j=1}^ma^{ij}\nabla^{2}_{e_{i},e{_{j}}}+\sum_{j=1}^mb^{j}\nabla_{e_{j}}.
\end{equation*}
A simple calculation yields
\begin{equation*}
\nabla_{e_{i}}f=\df(e_i)=\left(\nabla_{e_{i}}\Sind\right)(V,V)+2\Sind\left(\nabla_{e_{i}}V,V\right).
\end{equation*}
Taking into account that $\gE$ is parallel, we deduce that
$$0=(\nabla f)(x_0)=(\nabla\Sind)(v,v)=(\nabla\phi)(v,v).$$
Furthermore,
\begin{eqnarray*}
\nabla^{2}_{e_{i},e_{j}}f&=&(\nabla^{2}_{e_{i},e_{j}}\Sind)(V,V)
+2\Sind(V,\nabla^{2}_{e_{i},e_{j}}V)\\
&&+2\left(\nabla_{e_i}\Sind\right)(\nabla_{e_j}V,V)+2\left(\nabla_{e_j}\Sind\right) (\nabla_{e_i}V,V)\\
&&+2\Sind(\nabla_{e_i}V,\nabla_{e_j}V).
\end{eqnarray*}
Bearing in mind the definition of $S$ and using the fact that $\gE$ is parallel with respect to $\nabla$,
we obtain
\begin{eqnarray*}
\mathscr{L}f&=&(\mathscr{L}\phi)(V,V)+2\Sind(V,\mathscr{L}V)\\
&&+\sum_{i,j=1}^{m}2a^{ij}\left\{\Sind(\nabla_{e_i}V,\nabla_{e_j}V)
+2(\nabla_{e_i}\Sind)(\nabla_{e_j}V,V)\right\}\\
&=&(\mathscr{L}\phi)(V,V)+2\Sind(V,\mathscr{L}V)\\
&&+\sum_{i,j=1}^{m}2a^{ij}\left\{\Sind(\nabla_{e_i}V,\nabla_{e_j}V)
+2(\nabla_{e_i}\Sind)(\nabla_{e_j}V,V)\right\}.
\end{eqnarray*}
Estimating at $x_0$ and taking into account that $V(x_0)=v$ is a null eigenvector
of $\Sind$ at $x_0$, we get
$$0\ge(\mathscr{L}f)(x_0)=(\mathscr{L}\phi)(v,v).$$
This completes the proof.
\end{proof}

\subsection{An application}
In order to demonstrate how to apply the strong elliptic maximum principle and the second
derivative criterion, we shall give here an example in the case of hypersurfaces in euclidean space.

Let $M$ be an oriented $m$-dimensional hypersurface of $\real{m+1}$. Denote by
$\xi$ a unit normal vector field along the hypersurface. The most natural symmetric $2$-tensor on
$M$ is the \textit{scalar second fundamental form} $h$ of the hypersurface with respect
to the unit normal direction $\xi$, that is
$$h(v,w):=-\langle \operatorname{d}\hspace{-2pt}\xi(v),w\rangle,$$
for any $v,w\in TM$.
The eigenvalues
$$\lambda_1\le\cdots\le\lambda_m$$
of $h$ with respect to the induced metric $\gind$ are called the \textit{principal curvatures} of the hypersurface.
The quantity $H$ given by
$$H:=\lambda_1+\cdots+\lambda_m$$
is called the \textit{scalar mean curvature} of the hypersurface and the function $\|h\|$
given by
$$\|h\|^2:=\lambda^2_1+\cdots+\lambda^{2}_m$$
is called the
\textit{norm of the second fundamental form} with respect to the metric $\gind$. It is well
known that if $h$ is non-negative definite, then $M$ is locally the boundary
of a convex subset of $\real{m+1}$. For this reason, the hypersurface $M$ is called \textit{convex} whenever
its scalar second fundamental form is non-negative definite.

In the sequel we will give an alternative short proof of a well-known theorem,
first proven by W. S\"uss \cite{suss}.
\begin{theorem}{\bf{(S\"uss)}}\label {thmalex}
Any closed and convex hypersurface $M$ in $\real{m+1}$ with constant mean curvature is a round
sphere.
\end{theorem}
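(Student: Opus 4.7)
The plan is to combine Simons' identity for the scalar second fundamental form with two instances of the maximum principle developed above: first Theorem \ref{mp2} applied to $h$ itself to upgrade the weak convexity $h\ge 0$ to strict positivity, and then the second derivative criterion of Theorem \ref{test} at an interior minimum of the smallest principal curvature to force umbilicity.

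For the first step, since $H$ is constant, Simons' identity in $\real{m+1}$ reduces to $\Delta h=Hh^{2}-\|h\|^{2}h$, where $h^{2}$ denotes composition, $h^{2}_{ij}=h_{ik}h^{k}{}_{j}$. Set $\Psi(\vartheta):=-H\vartheta^{2}+\|\vartheta\|^{2}\vartheta$, so the equation becomes $\Delta h+\Psi(h)=0$, and one immediately verifies Hamilton's null eigenvector condition: if $\vartheta\ge 0$ admits a null eigenvector $v$, then $\vartheta^{2}v=0$ as well, so $\Psi(\vartheta)(v,v)=0$. To locate a point where $h$ is strictly positive, I would pick any $p\in\real{m+1}\setminus M$ and take a farthest point $x_{*}\in M$ from $p$, which exists by compactness. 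A standard Hessian computation for $r(x):=|x-p|^{2}$ at $x_{*}$, using that the inner unit normal satisfies $\langle x_{*}-p,\nu(x_{*})\rangle=-|x_{*}-p|$, yields $h(x_{*})\ge |x_{*}-p|^{-1}g>0$, hence $h$ has no zero eigenvalue at $x_{*}$. If $h$ acquired a zero eigenvalue anywhere else, Theorem \ref{mp2} would propagate it to every point of $M$, contradicting strict positivity at $x_{*}$. Therefore $h>0$ on all of $M$.

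With $h$ strictly positive definite, let $\mu:=\min_{M}\lambda_{1}>0$, attained at some $y_{0}\in M$. Applying Theorem \ref{test} to $-h$ gives $(\Delta h)(w,w)\ge 0$ for every unit $w\in\eig_{\mu,h}(y_{0})$; evaluating through Simons' identity, this reads $\mu(H\mu-\|h\|^{2})\ge 0$ at $y_{0}$. On the other hand, since $\lambda_{i}\ge\mu$ for every $i$, one has $H\mu=\mu\sum_{i}\lambda_{i}\le\sum_{i}\lambda_{i}^{2}=\|h\|^{2}$, with equality if and only if every $\lambda_{i}$ equals $\mu$. The two inequalities force equality, so $h(y_{0})=\mu g$ and $\mu=H/m$. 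The bound $\lambda_{1}(x)\ge H/m$ combined with $\sum_{i}\lambda_{i}(x)\equiv H$ then forces $\lambda_{1}(x)=\cdots=\lambda_{m}(x)=H/m$ for every $x\in M$, so $M$ is totally umbilical. By the classical rigidity theorem for totally umbilical hypersurfaces in Euclidean space, $M$ must be a round sphere of radius $m/H$.

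The main obstacle is choosing the right tensor and the right convex set for each application of the maximum principle. The map $\Psi$ above satisfies the null eigenvector condition at the boundary of the positive cone (which is exactly what Theorem \ref{mp2} requires), but on more refined convex sets---such as the slab $\{0\le\vartheta\le\lambda_{\max}g\}$ one might hope to use directly in Theorem \ref{mp1}---the inward pointing condition fails for arbitrary $\vartheta$ in the boundary, because the constraint $\operatorname{tr}(\vartheta)=H$ is needed to close up the algebraic inequality. The two-step procedure---Theorem \ref{mp2} to secure strict convexity, then Theorem \ref{test} at the minimum of $\lambda_{1}$ to force equality of all principal curvatures---circumvents this issue. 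The geometric input, namely the farthest-point argument that produces a point where $h$ is strictly positive, is the only place where the ambient Euclidean structure enters the argument.
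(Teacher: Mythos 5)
Your proof is correct and follows the same overall architecture as the paper's argument (Simons' identity, the null-eigenvector condition for $\Psi(\vartheta)=\|\vartheta\|^2\vartheta-H\vartheta^{(2)}$, Theorem \ref{mp2} to rule out degeneracy, then Theorem \ref{test} at the minimum of the smallest principal curvature to force umbilicity). Where you genuinely diverge is in how the degenerate case is excluded. The paper assumes $\lambda_{\min}=0$, propagates the null eigenvalue everywhere via Theorem \ref{mp2}, and then reaches a contradiction through a global argument: on the open dense set where $\rank h$ is maximal, the Codazzi equation makes the nullity distribution integrable with leaves that are totally geodesic in $\real{m+1}$, hence complete (quoting Ferus), which is incompatible with compactness. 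You instead produce, before invoking Theorem \ref{mp2} at all, an explicit point $x_*$ (a farthest point from a fixed $p\notin M$) at which the comparison with the enclosing sphere of radius $R=|x_*-p|$ gives $h(x_*)\ge R^{-1}\gind>0$; if $h$ had a zero eigenvalue anywhere, Theorem \ref{mp2} would propagate it to $x_*$, a contradiction. Your route is shorter and more elementary (it avoids the nullity-foliation machinery entirely, trading it for a one-line Hessian computation for $|x-p|^2$), while the paper's version uses no ambient geometry beyond what is already encoded in the Gau{\ss} and Codazzi equations. Your closing algebraic step ($H\mu=\mu\sum\lambda_i\le\sum\lambda_i^2=\|h\|^2$ with equality iff all $\lambda_i=\mu$) is a mild, cleaner variant of the paper's estimate $\|h\|^2\ge H^2/m$ combined with $\lambda_{\min}\le H/m$; both are correct and lead to the same conclusion.
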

\begin{proof}
The Laplacian of the second fundamental form $h$ with respect to the induced Riemannian metric $\gind$,
is given by Simons' identity \cite{simons}
\begin{equation}\label{simons}
\Delta h+\|h\|^2h-Hh^{(2)}=0\,,
\end{equation}
where
$$h^{(2)}(v,w):=\operatorname{trace}\bigl(h(v,\cdot\,)\otimes h(w,\cdot\,)\bigr).$$
Since the manifold $M$ is closed, there exists an interior point $x_0\in M$, where the smallest
principal curvature $\lambda_1$ attains its global minimum $\lambda_{\min}$. Recall that by convexity we
have that $\lambda_{\min}\ge0$.

The fiberwise map $\Psi$ given by
$$\Psi(\vartheta)=\|\vartheta\|^2\vartheta-H\vartheta^{(2)},$$
obviously satisfies the null-eigenvector condition.

If $\lambda_{\min}=0$, then due to Theorem B, the smallest principal curvature
of $M$ vanishes everywhere. Hence, $\rank h<m$. It is a well known fact in
Differential Geometry that the set
$$M_{0}:=\{x\in M:\rank h_{x}={\max}_{z\in M}\rank h_{z}\},$$
is open and dense in $M$ (a standard reference is \cite{ferus}).
From the Codazzi equation, it follows that the nullity distribution
$$\mathcal{D}:=\{v\in TM_0:h(v,w)=0,\,\,\text{for all }w\in TM_0\},$$
is integrable and its integrals are totally geodesic submanifolds of $M$. On
the other hand, the Gau{\ss} formula says that these submanifolds are totally geodesic in
$\real{m+1}$. Moreover, because $M$ is complete it follows that these submanifolds must be
also complete. This contradicts with the assumption of compactness. Consequently,
the minimum $\lambda_{\min}$ of the smallest principal curvature must be strictly positive.

Let $v$ be a unit eigenvector of $h$ corresponding to $\lambda_{\min}$ at $x_0$. Applying
Theorem \ref{test}, we obtain
\begin{eqnarray*}
0&\ge&\|h\|^2(x_0)\lambda_{\min}-H\lambda_{\min}^2 \\
&=&\lambda_{\min}\left(\|h\|^2(x_0)-H\lambda_{\min}\right),
\end{eqnarray*}
Because $\|h\|^2\ge H^2/m$, we deduce that
$$\|h\|^2(x_0)-H\lambda_{\min}\ge H\left(H/m-\lambda_{\min}\right)\ge 0.$$
Consequently,
$$0\ge \lambda_{\min}H\left(H/m-\lambda_{\min}\right)\ge 0,$$
and so $H/m=\lambda_{\min}$. On the other hand $\lambda_{\min}$ is the global minimum of all
principal curvatures on $M$ and $H$ is constant. This shows that
the smallest principal curvature $\lambda_1(x)$ at an arbitrary point $x\in M$ satisfies
$$\lambda_{\min}\le\lambda_1(x)\le H/m=\lambda_{\min}.$$
Therefore $M$ is everywhere umbilic. It is well-known that the only closed and totally
umbilic hypersurfaces are the round spheres.
\end{proof}

\section{Bernstein Type Theorems for Minimal Maps}\label{sec3}
In this section we shall develop the relevant geometric identities for graphs induced by
smooth maps $f:M\to N$. Moreover, we will derive estimates that
will be crucial in the proofs of Theorems \ref{thmD}, \ref{thmC} and \ref{thmE}.

According to the Bernstein theorem \cite{bernstein}, all complete minimal graphs in the three
dimensional euclidean space are generated by affine maps. This result cannot be extended to complete minimal
graphs in any euclidean space without imposing further assumptions. There is a very rich and long literature
concerning complete minimal graphs which are generated by maps between euclidean
spaces, marked by works of W. Fleming \cite{fleming}, S.S. Chern and R. Osserman \cite{chern1}, J. Simons
\cite{simons}, E. Bombieri, E. de Giorgi and E. Giusti \cite{bombieri}, R. Schoen, L. Simon and S.T. Yau
\cite{schoen1}, S. Hildebrandt, J. Jost and K.-O. Widmann \cite{hildebrandt} and many others.

In the last decade there have been obtained several Bernstein type theorems in higher codimension,
see for instance \cite{swx}, \cite{li}, \cite{h-s2, h-s1} and \cite{jxy}.

The generalized Bernstein type problem that we are investigating here is to determine under which
additional geometric conditions minimal graphs generated by maps $f:M\to N$ are totally
geodesic. There are several recent results involving mean curvature flow in the case where both $M$ and $N$
are compact. For instance, we mention \cites{wang2,wang3,wang1}, \cite{sw}, \cite{tsui1}
and \cite{lee}. In these papers the authors prove that the mean curvature flow of graphs,
generated by smooth maps $f:M\to N$ satisfying suitable conditions,
evolves $f$ to a  constant map or an isometric immersion as time tends to infinity. This implies in particular
Bernstein results for minimal graphs satisfying the same conditions as the initial map.

\subsection{Geometry of graphs}

Let $(M,\gm)$ and $(N,\gn)$ be Riemannian manifolds of dimension $m$ and $n$, respectively.
The induced metric on the product manifold will be denoted by
$$\gk=\gm\times \gn.$$
A smooth map
$f:M\to N$ defines an embedding $F:M\to M\times N$, by
$$F(x)=\bigl(x,f(x)\bigr),\quad x\in M.$$
The graph of $f$ is defined to be the submanifold $\Gamma(f):=F(M)$. Since $F$ is an embedding,
it induces another Riemannian metric
$\gind:=F^*\gk$
on $M$. The two natural projections
$$\pi_{M}:M\times N\to M\,,\quad \pi_{N}:M\times N\to N$$
are submersions,
that is they are smooth and have maximal rank. Note that the tangent bundle  of the product manifold
$M\times N$, splits as a direct sum
\begin{equation*}
T(M\times N)=TM\oplus TN.
\end{equation*}
The four metrics $\gm,\gn,\gk$ and $\gind$ are related by
\begin{eqnarray}
\gk&=&\pi_M^*\gm+\pi_N^*\gn\,,\label{met1}\\
\gind&=&F^*\gk=\gm+f^*\gn\,.\label{met2}
\end{eqnarray}
Additionally, let us define the symmetric $2$-tensors
\begin{eqnarray}
\sk&:=&\pi_M^*\gm-\pi_{N}^*\gn\,,\label{met3}\\
\sind&:=&F^*\sk=\gm-f^*\gn\,.\label{met4}
\end{eqnarray}
Note that $\sk$ is a semi-Riemannian metric of signature $(m,k)$ on the manifold $M\times N$.

The Levi-Civita connection $\nabla^{\gk}$ associated to the Riemannian metric $\gk$ on $M \times N$ is
related to the Levi-Civita connections $\nabla^{\gm}$ on $(M,\gm)$ and $\nabla^{\gn}$ on
$(N,\gn)$ by
$$\nabla^{\gk}=\pi_M^*\nabla^{\gm}\oplus\pi_N^*\nabla^{\gn}\,.$$
The corresponding curvature operator $\rk$ on $M\times N$ with respect to the metric
$\gk$ is related to the curvature
operators $\rm$ on $(M,\gm)$ and $\rn$ on $(N,\gn)$ by
\begin{equation*}
\rk=\pi^{*}_{M}\rm\oplus\pi^{*}_{N}\rn.
\end{equation*}
Denote the Levi-Civita connection on $M$ with respect to the induced metric
$\gind=F^*\gk$ simply by $\nabla$ and the curvature tensor by $\rind$.

On the manifold $M$ there are many interesting bundles. The most important
one is the \textit{pull-back bundle} $F^{\ast}T(M\times N)$.
The differential $\dF$ of the map $F$ is a section in $F^{\ast}T(M\times N)\otimes T^*M$.
The covariant derivative of it is called the \textit{second fundamental tensor}
$A$ of the graph. That is,
\begin{equation*}
A(v,w):=(\widetilde\nabla\hspace{-2pt}\dF)(v,w)=\nabla^{\gk}_{\dF(v)}\dF(w)-\dF(\nabla_vw)
\end{equation*}
where $v,w\in TM$, $\widetilde\nabla$ is the induced connection on $F^{\ast}T(M\times N)\otimes T^*M$
and $\nabla$ is the Levi-Civita connection associated to the Riemannian metric
$$\gind:=F^*\gk.$$
The trace of $A$ with respect to the metric $\gind$ is called the
{\it mean curvature vector field} of $\Gamma(f)$ and it
will be denoted by
$$\vec{H}:=\operatorname{trace}A.$$
Note that $\vec{H}$ is a section in the normal bundle of the
graph. If $\vec{H}$ vanishes identically the graph is said to be
minimal. Following Schoen's \cite{schoen} terminology, a map $f:M\to N$
between Riemannian manifolds is called \textit{minimal} if its graph
$\Gamma(f)$ is a minimal submanifold of the product space $(M\times
N,\gk)$.

By \textit{Gau\ss' equation} the curvature tensors $\rind$ and $\rk$
are related by the formula
\begin{eqnarray}
\rind(v_1,w_1,v_2,w_2)&=&(F^*\rk)(v_1,w_1,v_2,w_2)\nonumber\\
&&+\gk\bigl( A(v_1,v_2),A(w_1,w_2)\bigr)\nonumber\\
&&-\gk\bigl( A(v_1,w_2),A(w_1,v_2)\bigr),\label{gauss}
\end{eqnarray}
for any $v_1,v_2,w_1,w_2\in TM$. Moreover, the second fundamental form satisfies the
\textit{Codazzi equation}
\begin{eqnarray}
(\nabla_uA)(v,w)-(\nabla_vA)(u,w)&=&\rk\bigl(\dF(u),\dF(v)\bigr)\dF(w)\nonumber\\
&&-\dF\bigl(\rind(u,v)w\bigr),\label{codazzi}
\end{eqnarray}
for any $u,v,w$ on $TM$.

\subsection{Singular decomposition}
In this subsection we closely follow the notations used in \cite{tsui1}.
For a fixed point $x\in M$, let
$$\lambda^2_{1}(x)\le\cdots\le\lambda^2_{m}(x)$$
be the eigenvalues of $f^{*}\gn$ with respect to $\gm$. The corresponding values $\lambda_i\ge 0$,
$i\in\{1,\dots,m\}$, are usually called
\textit{singular values} of the differential $\df$ of $f$ and give rise to continuous functions on $M$. Let
$$r=r(x)=\rank\df(x).$$
Obviously, $r\le\min\{m,n\}$ and $\lambda_{1}(x)=\cdots=\lambda_{m-r}(x)=0.$
At the point $x$ consider an orthonormal basis $\{\alpha_{1},\dots,\alpha_{m-r};\alpha_{m-r+1},
\dots,\alpha_{m}\}$ with respect to $\gm$ which diagonalizes $f^*\gn$. Moreover, at
$f(x)$ consider an orthonormal basis $\{\beta_{1},\dots,\beta_{n-r};\beta_{n-r+1},\dots,\beta_{n}\}$
with respect to $\gn$ such that
$$\df(\alpha_{i})=\lambda_{i}(x)\beta_{n-m+i},$$
for any $i\in\{m-r+1,\dots,m\}$. The above procedure is called the \textit{singular decomposition}
of the differential $\df$.

Now we are going to define a special basis for the tangent and the normal space of the graph
in terms of the singular values. The vectors
\begin{equation}
e_{i}:=\left\{
\begin{array}{ll}
\alpha _{i}, & 1\le i\le m-r,\\
&  \\
\frac{1}{\sqrt{1+\lambda _{i}^{2}\left( x\right) }}\left( \alpha
_{i}\oplus \lambda _{i}\left(x\right) \beta _{n-m+i}\right) , & m-r+1\leq
i\leq m,
\end{array}
\right.\label{tangent}
\end{equation}
form an orthonormal basis with respect to the metric $\gk$ of the tangent space $\dF\left(T_{x}M\right)$ of the graph $\Gamma(f)$ at
$x$. Moreover, the vectors
\begin{equation}
\xi_{i}:=\left\{
\begin{array}{ll}
\beta _{i}, & 1\leq i\leq n-r,\\
&  \\
\frac{1}{\sqrt{1+\lambda _{i+m-n}^{2}\left( x\right) }}\left( -\lambda
_{i+m-n}(x)\alpha _{i+m-n}\oplus \beta _{i}\right) , & n-r+1\leq i\leq n, \\
\end{array}
\right.\label{normal}
\end{equation}
give an orthonormal basis with respect to  $\gk$ of the normal space $\mathcal{N}_{x}M$ of the
graph $\Gamma(f)$ at the point $f(x)$. From the formulas above, we deduce that
\begin{equation}
\sk(e_{i},e_{j})=\frac{1-\lambda^{2}_{i}(x)}{1+\lambda^{2}_{i}(x)}\delta_{ij},\quad 1\le i,j\le m.
\end{equation}
Consequently, the eigenvalues of the $2$-tensor $\sind$ with respect to $\gind$, are
$$\frac{1-\lambda^{2}_{1}(x)}{1+\lambda^{2}_{1}(x)}\ge\cdots\ge\frac{1-\lambda^{2}_{m-1}(x)}{1+\lambda^{2}_{m-1}(x)}
\ge\frac{1-\lambda^{2}_{m}(x)}{1+\lambda^{2}_{m}(x)}.$$
Moreover,
\begin{eqnarray}
\hspace{-.5cm}
\sk(\xi_{i},\xi_{j})
&=&\begin{cases}
\displaystyle
-\delta_{ij},&\,1\le i\le n-r\\[4pt]\displaystyle
-\frac{1-\lambda^{2}_{i+m-n}(x)}{1+\lambda^{2}_{i+m-n}(x)}\delta_{ij},&\, n-r+1\le i\le n.
\end{cases}\label{normal}
\end{eqnarray}
and
\begin{equation}
\sk(e_{m-r+i},\xi_{n-r+j})=-\frac{2\lambda_{m-r+i}(x)}{1+\lambda^{2}_{m-r+i}(x)}\delta_{ij},\quad 1\le i,j\le r.\label{mixed}
\end{equation}

\subsection{Area decreasing maps}
Recall that a map $f:M\to N$ is \textit{weakly area decreasing} if
$\|\Lambda^{2}\df\|\le 1$ and \textit{strictly area decreasing} if $\|\Lambda^{2}\df\|<1$.
The above notions are expressed in terms of the singular values by the inequalities
$$\lambda_{i}^2(x)\lambda_{j}^2(x)\le1\quad\text{and}\quad\lambda_{i}^2(x)\lambda_{j}^2(x)<1,$$
for any $1\le i<j\le m$ and $x\in M$, respectively. On the other hand, the sum of two eigenvalues of
the tensor $\sind$ with respect to $\gind$ equals
$$\frac{1-\lambda^{2}_{i}}{1+\lambda^{2}_{i}}+\frac{1-\lambda^{2}_{j}}{1+\lambda^{2}_{j}}=
\frac{2(1-\lambda^{2}_{i}\lambda^{2}_{j})}{(1+\lambda^{2}_{i})(1+\lambda^{2}_{j})}.$$
Hence, the strictly area-decreasing property of the map $f$ is equivalent to the $2$-\textit{positivity} of
the symmetric tensor $\sind$.

From the algebraic point of view, the $2$-positivity of a symmetric tensor $\T\in\sym(T^*M\otimes T^*M)$ can
be expressed as the convexity of another symmetric tensor $\T^{[2]}\in\sym(\Lambda^{2}T^*M\otimes\Lambda^{2}T^*M)$.
Indeed, let $\P$ and $\Q$ be two symmetric $2$-tensors. Then, the map $\P \varowedge\Q$ given
by
\begin{eqnarray*}
(P\varowedge\Q)(v_1\wedge w_1,v_2\wedge w_2)&=&\P(v_1,v_2)\Q(w_1,w_2)+\P(w_1,w_2)\Q(v_1,v_2) \\
&-&\P(w_1,v_2)\Q(v_1,w_2)-P(v_1,w_2)\Q(w_1,v_2)
\end{eqnarray*}
gives rise to an element of $\sym(\Lambda^{2}T^*M\otimes\Lambda^{2}T^*M)$. The operator $\varowedge$ is
called the \textit{Kulkarni-Nomizu product}.
Now we assign to each symmetric $2$-tensor $\T\in\sym(T^*M\otimes T^*M)$  an element $\T^{[2]}$ of the bundle
$\sym(\Lambda^{2}T^*M\otimes\Lambda^{2}T^*M)$, by setting
$$\T^{[2]}:=\T\varowedge\gind.$$
The Riemannian metric $\Gind$ of the bundle $\Lambda^{2}TM$ is related
to the Riemannian metric $\gind$ on the manifold $M$ by the  formula
$$\Gind=\tfrac{1}{2}\gind\varowedge\gind=\tfrac{1}{2}\gind^{[2]}.$$
The relation between the eigenvalues of $\T$ and the eigenvalues of
$\T^{[2]}$ is explained in the following lemma:
\begin{lemma}
Suppose that $\T$ is a symmetric $2$-tensor with eigenvalues $\mu_{1}\le\cdots\le\mu_{m}$
and corresponding eigenvectors $\{v_1,\dots,v_{m}\}$  with
respect to
$\gind$. Then the eigenvalues of the symmetric $2$-tensor $\T^{[2]}$ with respect to $\Gind$
are
$$\mu_{i}+\mu_{j},\quad 1\le i<j\le m,$$
with corresponding eigenvectors
$$v_{i}\wedge v_{j},\quad 1\le i<j\le m.$$
\end{lemma}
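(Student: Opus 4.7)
The plan is to verify that the bivectors $\{v_i \wedge v_j\}_{1\le i<j\le m}$ simultaneously diagonalize both $\Gind$ and $\T^{[2]}$; the claim about the eigenvalues then follows at once by reading off the diagonal entries.

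First I would unpack the Kulkarni-Nomizu product on these bivectors. For any symmetric $2$-tensors $\P, \Q$ the expression $(\P\varowedge\Q)(v_i\wedge v_j,\, v_k\wedge v_l)$ expands, straight from the definition given just before the lemma, as a signed sum of four products of one $\P$-component with one $\Q$-component. Because the basis $\{v_1,\dots,v_m\}$ is simultaneously $\gind$-orthonormal and $\T$-diagonal, every such component collapses to a Kronecker delta (weighted by a $\mu_a$ whenever $\T$ appears), so the whole expansion becomes a short sum of $\delta$-products.

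Next I would specialize this formula to $\Gind=\tfrac{1}{2}\gind\varowedge\gind$ and to $\T^{[2]}=\T\varowedge\gind$. The first specialization produces $\Gind(v_i\wedge v_j,\, v_k\wedge v_l)=\delta_{ik}\delta_{jl}-\delta_{il}\delta_{jk}$, and restricting indices to $i<j,\, k<l$ kills the term $\delta_{il}\delta_{jk}$ (since $i=l$ and $j=k$ together with $i<j,\,k<l$ is impossible), confirming that $\{v_i\wedge v_j\}_{i<j}$ is $\Gind$-orthonormal. The second specialization yields $\T^{[2]}(v_i\wedge v_j,\, v_k\wedge v_l)=(\mu_i+\mu_j)(\delta_{ik}\delta_{jl}-\delta_{il}\delta_{jk})$, after the $\delta$'s force the relevant $\mu$-indices to coincide and be added; the same ordering restriction eliminates the second term, leaving $(\mu_i+\mu_j)\delta_{ik}\delta_{jl}$.

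This identifies $v_i\wedge v_j$ as an eigenvector of $\T^{[2]}$ with eigenvalue $\mu_i+\mu_j$, as claimed. There is no real obstacle here: the entire argument is combinatorial bookkeeping with the four terms of the Kulkarni-Nomizu expansion, and the only mild subtlety is to observe that the ordering convention $i<j,\,k<l$ is precisely what suppresses the antisymmetric half of the $\delta$-pairings and produces a genuine eigenvalue decomposition rather than a redundant one.
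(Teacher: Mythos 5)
Your proposal is correct, and it is exactly the argument the lemma invites: expand the Kulkarni--Nomizu products on the simple bivectors $v_i\wedge v_j$ in a $\gind$-orthonormal eigenbasis of $\T$, use $\gind(v_a,v_b)=\delta_{ab}$ and $\T(v_a,v_b)=\mu_a\delta_{ab}$ to reduce everything to Kronecker deltas, and note that the ordering $i<j$, $k<l$ annihilates the $\delta_{il}\delta_{jk}$ term. This simultaneously shows $\Gind(v_i\wedge v_j,v_k\wedge v_l)=\delta_{ik}\delta_{jl}$ and $\T^{[2]}(v_i\wedge v_j,v_k\wedge v_l)=(\mu_i+\mu_j)\delta_{ik}\delta_{jl}$, which is precisely the claimed spectral decomposition. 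The paper states this lemma without a proof, treating it as a standard linear-algebra fact; your computation supplies the routine verification that the authors left implicit, and there is nothing to compare or correct.
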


\subsection{A Bochner-Weitzenb\"{o}ck formula}
Our next goal is to compute the Laplacians of the tensors $\sind$ and $\sind^{[2]}$. The next computations
closely follow those for a similarly defined tensor in \cite{sw}.
In order to control the smallest eigenvalue of $\sind$, let us define the symmetric $2$-tensor $$\Phi_c:=\sind-\frac{1-c}{1+c}\gind,$$
where $c$ is a non-negative constant.

At first let us compute the covariant derivative of the tensor $\Phi_{c}$. Since $\nabla\hspace{-3pt}\gind=0$ and $\nabla^{\gk}\sk=0$, we have
\begin{eqnarray*}
(\nabla_v\Phi_c)(u,w)&=&(\nabla_v\sind)(u,w) \\
&=&(\nabla^{\gk}_{\dF(v)}\sk)\bigr(\dF(u),\dF(w)\bigl) \\
&&+\sk\bigl(A(v,u),\dF(w)\bigr)+\sk\bigl(\dF(u),A(v,w)\bigr) \\
&=&\sk\bigl(A(v,u),\dF(w)\bigr)+\sk\bigl(\dF(u),A(v,w)\bigr),
\end{eqnarray*}
for any tangent vectors $u,v,w\in TM$.

Now let us compute the Hessian of $\Phi_c$.
Differentiating once more gives
\begin{eqnarray}
&&\left(\nabla^2_{v_1,v_2}\Phi_c\right)\hspace{-4pt}(u,w)\nonumber\\
&&\quad=\sk\bigl((\nabla_{v_1}A)(v_2,u),\dF(w)\bigr)
+\sk\bigl(A(v_2,u),A(v_1,w)\bigr)\nonumber\\
&&\quad\quad+\sk\bigl(A(v_1,u),A(v_2,w)\bigr)+\sk\bigl(\dF(u),(\nabla_{v_1}A)(v_2,w)\bigr),
\nonumber
\end{eqnarray}
for any tangent vectors $v_1,v_2,u,w\in TM$. Applying Codazzi's equation (\ref{codazzi}) and exploiting the
symmetry of $A$ and $\sk$, we derive
\begin{eqnarray}
&&\left(\nabla^2_{v_1,v_2}\Phi_c\right)\hspace{-4pt}(u,w)\nonumber\\
&&\quad=\sk\bigl((\nabla_{u}A)(v_1,v_2)+\rk\bigl(\dF(v_1),\dF(u)\bigr)\dF(v_2),\dF(w)\bigr)\nonumber\\
&&\quad+\sk\bigl((\nabla_{w}A)(v_1,v_2)+\rk\bigl(\dF(v_1),\dF(w)\bigr)\dF(v_2),\dF(u)\bigr)\nonumber\\
&&\quad+\sk\bigl(A(v_1,u),A(v_2,w)\bigr)+\sk\bigl(A(v_1,w),A(v_2,u)\bigr)\nonumber\\
&&\quad-\sind\bigl(\rind(v_1,u)v_2,w\bigr)-\sind\bigl(\rind(v_1,w)v_2,u\bigr).\nonumber
\end{eqnarray}
The decomposition of the tensors $\sk$ and $\rk$, implies
\begin{eqnarray}
&&\sk\Bigl(\rk\bigl(\dF(v_1),\dF(u)\bigr)\dF(v_2),\dF(w)\Bigr)\nonumber\\
&&\quad=(\pi_M^*\gm)\bigl(\rk\bigl(\dF(v_1),\dF(u)\bigr)\dF(v_2),\dF(w)\bigr)\nonumber\\
&&\quad\quad-(\pi_N^*\gn)\bigl(\rk\bigl(\dF(v_1),\dF(u)\bigr)\dF(v_2),\dF(w)\bigr)\nonumber\\
&&\quad=\gm\bigl(\rm\bigl(v_1,u\bigr)v_2,w\bigr)-\gn\bigl(\rn\bigl(\df(v_1),\df(u)\bigr)\df(v_2),\df(w)\bigr)\nonumber\\
&&\quad=\rn\bigl(\df(v_1),\df(u),\df(v_2),\df(w)\bigr)-\rm\bigl(v_1,u,v_2,w\bigr)\,.\nonumber
\end{eqnarray}

Gau\ss' equation (\ref{gauss}) gives
\begin{eqnarray}
&-&\sind\bigl(\rind(v_1,u)v_2,w\bigr)\nonumber\\
&&=-\Phi_c\bigl(\rind(v_1,u)v_2,w\bigr)-\frac{1-c}{1+c}\gind\bigl(\rind(v_1,u)v_2,w\bigr)\nonumber\\
&&=-\Phi_c\bigl(\rind(v_1,u)v_2,w\bigr)+\frac{1-c}{1+c}\rind\bigl(v_1,u,v_2,w\bigr)\nonumber\\
&&=-\Phi_c\bigl(\rind(v_1,u)v_2,w\bigr)\nonumber\\
&&\,\,+\frac{1-c}{1+c}\left\{\gk(A(v_1,v_2),A(u,w))-\gk(A(v_1,w),A(v_2,u))\right\}\nonumber\\
&&\,\,+\frac{1-c}{1+c}\rm(v_1,u,v_2,w)+\frac{1-c}{1+c}\rn(\df(v_1),\df(u),\df(v_2),\df(w)).\nonumber
\end{eqnarray}

In the sequel consider any local orthonormal frame field $\{e_1,\dots,e_m\}$ with respect to the induced
metric $\gind$ on $M$. Then, taking a trace,  we derive the Laplacian of the tensor $\Phi_{c}$.

Let us now summarize the previous computations in the next lemma:

\begin{lemma}\label{laplacephi}
For any smooth map $f:M\to N$, the symmetric tensor $\Phi_c$ satisfies the identity
\begin{eqnarray*}
\bigl(\Delta\Phi_c\bigr)(v,w)&=&\sk\bigl(\nabla_v\vec{H},\dF(w)\bigr)+\sk(\nabla_w\vec{H},\dF(v)\bigr)\\
&&+2\frac{1-c}{1+c}\gk\bigl(\vec{H},A(v,w)\bigr) \\
&&+\Phi_c\bigl(\operatorname{Ric}v,w\bigr)+\Phi_c\bigl(\operatorname{Ric}w,v\bigr) \\
&&+2\sum_{k=1}^m\bigl(\sk-\frac{1-c}{1+c}\gk\bigr)\bigl(A(e_k,v),A(e_k,w)\bigr) \\
&&+\frac{4}{1+c}\sum_{k=1}^m\Bigl(f^*\rn(e_k,v,e_k,w)-c\rm(e_k,v,e_k,w)\Bigr),
\end{eqnarray*}
where
$$\operatorname{Ric}v:=-\sum_{k=1}^m\rind(e_k,v)e_k$$
is the Ricci operator on $(M,\gind)$ and $\{e_1,\dots,e_m\}$ is any orthonormal
frame with respect to the induced metric $\gind$.
\end{lemma}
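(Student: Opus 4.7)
The plan is to obtain $(\Delta\Phi_c)(v,w)$ by tracing the Hessian expression $(\nabla^2_{v_1,v_2}\Phi_c)(u,w)$ derived in the paragraphs preceding the lemma (with $u$ renamed $v$ to match the slots of $\Phi_c$) over $v_1=v_2=e_k$ for a local $\gind$-orthonormal frame $\{e_1,\dots,e_m\}$ chosen geodesic at the point of evaluation. First I would split the Hessian into its eight summands, namely two $\nabla A$-terms, two $\rk$-terms, two quadratic-in-$A$ terms, and the two $-\sind\bigl(\rind(\cdot,\cdot)\cdot,\cdot\bigr)$ terms that the paper has already rewritten through Gau{\ss}, and take the trace of each block separately.

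For the derivative block, since $\vec H=\operatorname{trace}_{\gind}A$ and $\gind$ is parallel we have $\sum_k(\nabla_v A)(e_k,e_k)=\nabla_v\vec H$, producing the first line $\sk(\nabla_v\vec H,\dF(w))+\sk(\nabla_w\vec H,\dF(v))$. The two $\Phi_c\circ\rind$ pieces from Gau{\ss} collapse to $\Phi_c(\operatorname{Ric}v,w)+\Phi_c(\operatorname{Ric}w,v)$ using the definition $\operatorname{Ric}v=-\sum_k\rind(e_k,v)e_k$. The quadratic-in-$A$ block contributes $2\sum_k\sk(A(e_k,v),A(e_k,w))$ by symmetry of $\sk$, whereas the $\gk$-residues produced by Gau{\ss} contribute $-2\tfrac{1-c}{1+c}\sum_k\gk(A(e_k,v),A(e_k,w))$; adding these gives exactly the line $2\sum_k\bigl(\sk-\tfrac{1-c}{1+c}\gk\bigr)\bigl(A(e_k,v),A(e_k,w)\bigr)$. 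The diagonal residues $\gk(A(e_k,e_k),A(v,w))$ coming from Gau{\ss} sum to $2\tfrac{1-c}{1+c}\gk(\vec H,A(v,w))$.

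The only step of actual arithmetic is the combination of the ambient-curvature terms. Applying the product decomposition $\sk\bigl(\rk(\dF(\cdot),\dF(\cdot))\dF(\cdot),\dF(\cdot)\bigr)=f^*\rn-\rm$ already verified in the excerpt, and using the pair-symmetry $\rk(e_k,v,e_k,w)=\rk(e_k,w,e_k,v)$ to identify the two $\rk$-terms, that block contributes $2\sum_k\bigl(f^*\rn-\rm\bigr)(e_k,v,e_k,w)$. From the Gau{\ss} side, the remaining $\rm$- and $\rn$-pieces with prefactor $\tfrac{1-c}{1+c}$ sum to $2\tfrac{1-c}{1+c}\sum_k\bigl(\rm+f^*\rn\bigr)(e_k,v,e_k,w)$. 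Adding the two contributions, the coefficient of $f^*\rn$ becomes $2\bigl(1+\tfrac{1-c}{1+c}\bigr)=\tfrac{4}{1+c}$ and the coefficient of $\rm$ becomes $2\bigl(-1+\tfrac{1-c}{1+c}\bigr)=-\tfrac{4c}{1+c}$, exactly reproducing the last line $\tfrac{4}{1+c}\sum_k\bigl(f^*\rn-c\,\rm\bigr)(e_k,v,e_k,w)$. I expect the only real obstacle to be the careful bookkeeping of signs and prefactors — in particular those inherited from $\sk=\pi_M^*\gm-\pi_N^*\gn$ and from the Gau{\ss} rewriting — since all of the analytic ingredients (Codazzi, Gau{\ss}, and the product splitting of metric and curvature) have already been assembled in the derivation preceding the lemma.
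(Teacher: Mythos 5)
Your proposal is correct and follows the paper's own route: the paper derives the Hessian $\bigl(\nabla^2_{v_1,v_2}\Phi_c\bigr)(u,w)$ in the preceding paragraphs and then simply remarks that taking a trace yields the Laplacian. You carry out that trace explicitly, and the sign and coefficient bookkeeping, in particular the $\tfrac{4}{1+c}$ and $-\tfrac{4c}{1+c}$ prefactors on $f^*\rn$ and $\rm$, checks out.
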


The expressions of the covariant derivative and the Laplacian of a symmetric
$2$-tensor $\T^{[2]}$ are given in the following Lemma. The proof follows by a straightforward
computation and for that reason will be omitted.

\begin{lemma}\label{laplacesind}
Any symmetric $2$-tensor $\T$ satisfies the identities,
\begin{enumerate}[(i)]
\item
$\nabla_{v}\T^{[2]}=\left(\nabla_{v}\T\right)^{[2]},$
\medskip
\item
$\nabla^2_{v,v}\T^{[2]}=\left(\nabla^2_{v,v}\T\right)^{[2]},$
\medskip
\item
$\Delta\T^{[2]}=\left(\Delta\T\right)^{[2]},$
\end{enumerate}
for any vector $v$ on the manifold $M$.
\end{lemma}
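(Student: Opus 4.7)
The plan is to exploit the single fact that the Levi-Civita connection is a metric connection, so that $\nabla\gind=0$, and to let the Leibniz rule do all the work. The key observation is that the map $\T\mapsto \T^{[2]}=\T\varowedge\gind$ is $\mathbb{R}$-linear in $\T$, and since the Kulkarni--Nomizu product $\varowedge$ is bilinear over smooth functions in its two slots while $\nabla$ is a derivation that commutes with every natural algebraic operation on tensors,
\begin{equation*}
\nabla_v \T^{[2]} = \nabla_v\bigl(\T\varowedge \gind\bigr) = (\nabla_v \T)\varowedge\gind + \T\varowedge(\nabla_v\gind) = (\nabla_v \T)^{[2]},
\end{equation*}
because $\nabla_v\gind=0$. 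This establishes (i).

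For (ii), I apply $\nabla_v$ once more, using (i) both for $\T$ and with $\nabla_v\T$ in place of $\T$:
\begin{equation*}
\nabla_v\nabla_v \T^{[2]} = \nabla_v\bigl((\nabla_v \T)^{[2]}\bigr) = (\nabla_v\nabla_v \T)^{[2]}.
\end{equation*}
Passing to the Hessian $\nabla^2_{v,v} = \nabla_v\nabla_v - \nabla_{\nabla_v v}$ and applying (i) to the second term,
\begin{equation*}
\nabla_{\nabla_v v}\T^{[2]} = (\nabla_{\nabla_v v}\T)^{[2]},
\end{equation*}
the $\mathbb{R}$-linearity of $(\cdot)^{[2]}$ yields (ii).

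Finally, (iii) is obtained from (ii) by summing over an $\gind$-orthonormal frame $\{e_1,\dots,e_m\}$ with $v=e_k$ and pulling the finite sum through the linear operator $(\cdot)^{[2]}$:
\begin{equation*}
\Delta\T^{[2]} = \sum_{k=1}^m \nabla^2_{e_k,e_k}\T^{[2]} = \Bigl(\sum_{k=1}^m\nabla^2_{e_k,e_k}\T\Bigr)^{[2]} = (\Delta\T)^{[2]}.
\end{equation*}
The only conceivable subtlety is the verification that the connection induced on $\sym(\Lambda^2 T^*M\otimes\Lambda^2 T^*M)$ from $\gind$ agrees with the derivation obtained by differentiating sections of the form $\T\varowedge\gind$; but both are the canonical extensions of the Levi-Civita connection to the tensor algebra, so this is pure bookkeeping and there is no genuine geometric obstacle. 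The entire argument reduces to $\nabla\gind=0$ together with the functoriality of $\varowedge$.
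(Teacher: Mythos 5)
Your proof is correct, and since the paper omits this proof entirely (``the proof follows by a straightforward computation''), you have supplied exactly the expected direct argument. The whole thing hinges, as you observe, on the two facts that $\varowedge$ is a natural bilinear algebraic operation (so the induced connection satisfies the Leibniz rule $\nabla_v(\P\varowedge\Q)=(\nabla_v\P)\varowedge\Q+\P\varowedge(\nabla_v\Q)$) and that $\nabla\gind=0$; items (ii) and (iii) then follow by $\mathbb{R}$-linearity of $\T\mapsto\T^{[2]}$ and the definitions of the Hessian and the trace Laplacian. No gap.
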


\subsection{Proofs of the Theorems \ref{thmD}, \ref{thmC} and \ref{thmE}}
We will first show the following lemma.
\begin{lemma}\label{lem cd}
Let $f:M\to N$ be weakly length decreasing. Suppose $\{e_1,\dots,e_m\}$ is orthonormal with
respect to $\gind$ such that it diagonalizes the tensor $\sind$. Then for any $e_l$ we have
\begin{eqnarray*}
&&2\sum_{k=1}^m\Bigl(\rm(e_k,e_l,e_k,e_l)-f^*\rn(e_k,e_l,e_k,e_l)\Bigr) \\
&=&
2\sum_{k\neq l}\frac{\lambda_k^2}{1+\lambda_k^2}\Bigl\{\bigl(\sigma-\sigma_N(\df(e_k)\wedge\df(e_l))\bigr)
f^*\gn(e_l,e_l)\\
&&\hspace{4cm}+\sigma\bigl(\gm(e_l,e_l)-f^*\gn(e_l,e_l)\bigr)\Bigr\}\\
&&+\operatorname{Ric}_M(e_l,e_l)-(m-1)\sigma\gm(e_l,e_l)\\
&&+\sum_{k\neq l}\frac{1-\lambda_k^2}{1+\lambda_k^2}\Bigl(\sigma_M(e_k\wedge e_l)+\sigma\Bigr)
\gm(e_l,e_l),
\end{eqnarray*}
where $\operatorname{Ric}_M$ denotes the Ricci curvature with respect to $\gm$, $\sigma_M(e_k\wedge e_l)$
and $\sigma_N(\df(e_k)\wedge \df(e_l))$ are the sectional
curvatures of the planes $e_k\wedge e_l$ on $(M,\gm)$ and  $\df(e_k)\wedge\df (e_l)$ on $(N,\gn)$ respectively.
\end{lemma}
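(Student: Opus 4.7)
The identity is essentially algebraic once the diagonalizing frame is properly understood. First I would observe that, because $\{e_1,\dots,e_m\}$ is $\gind$-orthonormal and diagonalizes $\sind$, and since $\gm=\tfrac12(\gind+\sind)$ and $f^*\gn=\tfrac12(\gind-\sind)$, the same frame is simultaneously $\gm$- and $f^*\gn$-orthogonal. Invoking the singular value decomposition from Section 3.2, I would identify the diagonal entries as
$$a_l:=\gm(e_l,e_l)=\frac{1}{1+\lambda_l^2},\qquad b_l:=f^*\gn(e_l,e_l)=\frac{\lambda_l^2}{1+\lambda_l^2},$$
which satisfy $a_l+b_l=1$. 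In particular, the coefficients $\tfrac{\lambda_k^2}{1+\lambda_k^2}$ and $\tfrac{1-\lambda_k^2}{1+\lambda_k^2}$ on the right-hand side of the lemma are precisely $b_k$ and $a_k-b_k$.

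Next, the sectional-curvature formula, together with these orthogonality relations, gives for $k\neq l$
$$\rm(e_k,e_l,e_k,e_l)=\sigma_M(e_k\wedge e_l)\,a_k a_l,\qquad f^*\rn(e_k,e_l,e_k,e_l)=\sigma_N(\df(e_k)\wedge\df(e_l))\,b_k b_l,$$
while both vanish at $k=l$. Rescaling to the $\gm$-orthonormal frame $\tilde e_k:=e_k/\sqrt{a_k}$ yields
$$\mric(e_l,e_l)=a_l\sum_{k\neq l}\sigma_M(e_k\wedge e_l).$$
The remaining task is then to rearrange
$$2\sum_{k\neq l}\bigl(a_k a_l\,\sigma_M(e_k\wedge e_l)-b_k b_l\,\sigma_N(\df(e_k)\wedge\df(e_l))\bigr)$$
into the claimed form. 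I would expand the three lines on the right-hand side in the variables $a_k,b_k$ and verify three matchings: the $\sigma_N$-contribution from the first line reproduces $-2b_l\sum_{k\neq l}b_k\sigma_N(\df(e_k)\wedge\df(e_l))$ directly; the $\sigma_M$-contributions from the second and third lines combine via the identity $1+(a_k-b_k)=2a_k$, which follows from $a_k+b_k=1$, to produce $2a_l\sum_{k\neq l}a_k\sigma_M(e_k\wedge e_l)$; and the residual $\sigma$-terms across all three lines collapse, using $\sum_{k\neq l}(a_k+b_k)=m-1$, to cancel the correction $-(m-1)\sigma\gm(e_l,e_l)$ exactly.

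The main obstacle is purely bookkeeping; no deep structure beyond simultaneous diagonalization and the constraint $a_k+b_k=1$ is required. The form of the right-hand side is rigged so that the explicit $\sigma$-insertions will later expose the comparison hypotheses $\sigma_M>-\sigma$ and $\mric/(m-1)\ge\sigma\ge\sigma_N$ used in Theorems D and C. The weakly length-decreasing hypothesis itself is not needed for the algebraic identity, but it guarantees $b_k\le a_k$, equivalently $\sind\ge 0$, which is the setting in which the identity will be applied.
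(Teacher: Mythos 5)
Your proposal is correct and follows essentially the same route as the paper: both use the simultaneous $\gind$-, $\gm$-, and $f^*\gn$-orthogonality of the frame, the identifications $\gm(e_k,e_k)=\tfrac{1}{1+\lambda_k^2}$ and $f^*\gn(e_k,e_k)=\tfrac{\lambda_k^2}{1+\lambda_k^2}$ with the constraint that they sum to one, the sectional-curvature formula to expand $\rm$ and $f^*\rn$, and the rescaled $\gm$-orthonormal frame to identify the Ricci term. The paper proceeds forward from the left-hand side by inserting and re-subtracting $\sigma$-terms, while you verify the identity by expanding the right-hand side and matching coefficients, but the key algebraic observations ($1+(a_k-b_k)=2a_k$, the $\sigma$-terms collapsing via $\sum_{k\neq l}(a_k+b_k)=m-1$) are identical.
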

\begin{proof}
In terms of the singular
values we get
$$s(e_k,e_k)=\gm(e_k,e_k)-f^*\gn(e_k,e_k)=\frac{1-\lambda_k^2}{1+\lambda_k^2}.$$
Since
$$1=\gind(e_k,e_k)=\gm(e_k,e_k)+f^*\gn(e_k,e_k)$$
we derive
$$\gm(e_k,e_k)=\frac{1}{1+\lambda_k^2},\quad f^*\gn(e_k,e_k)=\frac{\lambda_k^2}{1+\lambda_k^2}$$
and
$$2\gm(e_k,e_k)=\frac{1-\lambda_k^2}{1+\lambda_k^2}+1,\quad -2f^*\gn(e_k,e_k)=\frac{1-\lambda_k^2}{1+\lambda_k^2}-1.$$
Note also that for any $k \neq l$ we have
$$\gm(e_k,e_l)=f^*\gn(e_k,e_l)=\gind(e_k,e_l)=0.$$
We compute
\begin{eqnarray*}
&&2\sum_{k=1}^m\Bigl(\rm(e_k,e_l,e_k,e_l)-f^*\rn(e_k,e_l,e_k,e_l)\Bigr) \\
&=&2\sum_{k\neq l}\sigma_M(e_k\wedge e_l)\gm(e_k,e_k)\gm(e_l,e_l)\\
&&-2\sum_{k\neq l}\sigma_N(\df(e_k)\wedge \df(e_l))f^*\gn(e_k,e_k)f^*\gn(e_l,e_l).
\end{eqnarray*}
Hence the formula for $\gm(e_k,e_k)$ implies
\begin{eqnarray*}
&&2\sum_{k=1}^m\Bigl(\rm(e_k,e_l,e_k,e_l)-f^*\rn(e_k,e_l,e_k,e_l)\Bigr) \\
&=&\sum_{k\neq l}\left(1+\frac{1-\lambda_k^2}{1+\lambda_k^2}\right)\sigma_M(e_k\wedge e_l)\gm(e_l,e_l)\\
&&+2\sum_{k\neq l}f^*\gn(e_k,e_k)\Bigl\{\bigl(\sigma-\sigma_N(\df(e_k)\wedge\df(e_l))\bigr)
f^*\gn(e_l,e_l)\\
&&\hspace{4cm}+\sigma\bigl(\gm(e_l,e_l)-f^*\gn(e_l,e_l)\bigr)\Bigr\}\\
&&-2\sigma\sum_{k\neq l}f^*\gn(e_k,e_k)\gm(e_l,e_l)\\
&=&\sum_{k\neq l}\left(1+\frac{1-\lambda_k^2}{1+\lambda_k^2}\right)\sigma_M(e_k\wedge e_l)\gm(e_l,e_l)\\
&&+2\sum_{k\neq l}f^*\gn(e_k,e_k)\Bigl\{\bigl(\sigma-\sigma_N(\df(e_k)\wedge\df(e_l))\bigr)
f^*\gn(e_l,e_l)\\
&&\hspace{4cm}+\sigma\bigl(\gm(e_l,e_l)-f^*\gn(e_l,e_l)\bigr)\Bigr\}\\
&&+\sigma\sum_{k\neq l}\left(\frac{1-\lambda_k^2}{1+\lambda_k^2}-1\right)\gm(e_l,e_l).
\end{eqnarray*}
We may then continue to get
\begin{eqnarray*}
&&2\sum_{k=1}^m\Bigl(\rm(e_k,e_l,e_k,e_l)-f^*\rn(e_k,e_l,e_k,e_l)\Bigr) \\
&=&
2\sum_{k\neq l}f^*\gn(e_k,e_k)\Bigl\{\bigl(\sigma-\sigma_N(\df(e_k)\wedge\df(e_l))\bigr)
f^*\gn(e_l,e_l)\\
&&\hspace{4cm}+\sigma\bigl(\gm(e_l,e_l)-f^*\gn(e_l,e_l)\bigr)\Bigr\}\\
&&+\operatorname{Ric}_M(e_l,e_l)-(m-1)\sigma\,\gm(e_l,e_l)\\
&&+\sum_{k\neq l}\frac{1-\lambda_k^2}{1+\lambda_k^2}\Bigl(\sigma_M(e_k\wedge e_l)+\sigma\Bigr)
\gm(e_l,e_l).
\end{eqnarray*}
This completes the proof.
\end{proof}

{\bf Proof of Theorem \ref{thmD}.}
Suppose that $f:M\to N$ is weakly length decreasing.
Then the tensor $\sind$ satisfies
$$\sind=\gm-f^*\gn\ge 0.$$
In case $\sind >0$ the map $f$ is also
strictly area decreasing. Thus in such a case the statement follows from
Theorem \ref{thmC} which we will prove further below. It remains to show that $\sind$
vanishes identically, if $\sind$ admits a null-eigenvalue somewhere.

{\bf Claim 1.} {\it The tensor $\sind$ has a null-eigenvalue everywhere on $M$, if this is
the case in at least one point $x\in M$.}

Since $\sind=\Phi_1$, from Lemma \ref{laplacephi} we get
$$\Delta \sind+\Psi(\sind)=0,$$
where
\begin{eqnarray*}
\bigl(\Psi(\vartheta)\bigr)(v,w)&=&-\vartheta\bigl(\operatorname{Ric}v,w\bigr)-\vartheta\bigl(\operatorname{Ric}w,v\bigr) \\
&&-2\sum_{k=1}^m\sk\bigl(A(e_k,v),A(e_k,w)\bigr) \\
&&+2\sum_{k=1}^{m}\Bigl(\rm(e_k,v,e_k,w)-f^*\rn(e_k,v,e_k,w)\Bigr).
\end{eqnarray*}
Let $v$ be a null-eigenvector of the symmetric, positive semi-definite tensor $\vartheta$.
Since $f$ is weakly length decreasing, equation (\ref{normal}) shows that $\sk$ is non-positive
definite on the normal bundle of the graph. Hence
\begin{eqnarray*}
\bigl(\Psi(\vartheta)\bigr)(v,v)\ge
2\sum_{k=1}^{m}\Bigl(\rm(e_k,v,e_k,v)-f^*\rn(e_k,v,e_k,v)\Bigr)\ge 0,
\end{eqnarray*}
where we have used Lemma \ref{lem cd} and the curvature assumptions on $(M,\gm)$, $(N,\gn)$
respectively. This shows that $\Psi$ satisfies
the null-eigenvector condition  and Claim 1 follows from the strong maximum principle
in Theorem \ref{mp2}.

{\bf Claim 2.} {\it If $\sind$ admits a null-eigenvalue at some point $x\in M$, then $\sind$ vanishes
at $x$.}

We already know that the tensor $\sind$ admits a null-eigenvalue everywhere on $M$.
Since $\sind\ge 0$ we may then apply the test criterion Theorem \ref{test} to the tensor $\sind$
at an arbitrary point $x\in M$.
At $x$ consider a basis $\{e_1,\dots,e_m\}$, orthonormal with respect to $\gind$ consisting of eigenvectors of $\sind$,
such that $v:=e_m$ is a null-eigenvector of $\sind$ and  $\lambda_m^2=1$.
From Lemma \ref{lem cd}, we conclude
\begin{eqnarray}
0&\ge&\bigl(\Psi(\sind)\bigr)(e_m,e_m)\nonumber\\
&\ge&2\sum_{k\neq m}\frac{\lambda_k^2}{1+\lambda_k^2}\Bigl\{\bigl(\underbrace{\sigma-\sigma_N(\df(e_k)\wedge\df(e_m))}_{\ge 0}\bigr)
\underbrace{f^*\gn(e_l,e_l)}_{\ge 0}\nonumber\\
&&\hspace{4cm}+\sigma\bigl(\underbrace{\gm(e_m,e_m)-f^*\gn(e_m,e_m)}_{= 0}\bigr)\Bigr\}\nonumber\\
&&+\underbrace{\operatorname{Ric}_M(e_m,e_m)-(m-1)\sigma\,\gm(e_m,e_m)}_{\ge 0}\nonumber\\
&&+\sum_{k\neq m}\underbrace{\frac{1-\lambda_k^2}{1+\lambda_k^2}}_{\ge 0}\Bigl(\underbrace{\sigma_M(e_k\wedge e_m)+\sigma}_{>0}\Bigr)\underbrace{\gm(e_m,e_m)}_{=\frac{1}{2}}=0,\label{eq vanish}
\end{eqnarray}
because the curvature assumptions on $(M,\gm)$ and $(N,\gn)$ imply that the
right hand side is a sum of non-negative terms and thus we conclude that all of them must vanish.
In particular,
$$\sigma_M(e_k\wedge e_m)+\sigma>0$$
implies $\lambda_k^2=1$ for all $k$.

Now we can finish the proof of Theorem \ref{thmD}. Claim 1 and 2 imply that a weakly
length decreasing map $f$ which is not strictly length decreasing must be an isometric immersion.
Once we know that all tangent vectors at $x$ are
null-eigenvectors of $\sind$, we may choose $e_m$ in (\ref{eq vanish}) arbitrarily. Then
$$\operatorname{Ric}_M(v,v)=(m-1)\sigma\gm(v,v)$$
and
$$\sigma=\sigma_N\bigl(\df(v),\df (w)\bigr)$$
for all linearly independent vectors $v,w\in T_xM$. This completes the proof of Theorem \ref{thmD}.
\hfill$\square$

{\bf Proof of Theorem \ref{thmC}.}
Since the manifold $M$ is compact, there exists a point $x_{0}$ where the smallest
eigenvalue of $\sind^{[2]}$ with respect to the metric $\Gind$ attains its minimum.
Let us denote this value by $\rho_{0}$. Note that in terms of the singular values
$$\lambda_1^2\le\cdots\le\lambda_m^2$$
we must have
$$\rho_{0}=\frac{1-\lambda^{2}_{m}(x_0)}{1+\lambda^{2}_{m}(x_0)}
+\frac{1-\lambda^{2}_{m-1}(x_0)}{1+\lambda^{2}_{m-1}(x_0)}\ge 0.$$
For simplicity we set
$$\kappa:=\lambda^{2}_{m-1}(x_0)\quad\text{and}\quad\mu:=\lambda^2_{m}(x_0).$$
Hence,
$$\rho_0=2\frac{1-\kappa\mu}{(1+\kappa)(1+\mu)}\,.$$
{\bf Claim 3.}\textit{ If $\mu=0$, then the map $f$ is constant.}

In this case we have $\rho_0=2$. Because $\rho_0$ is the minimum of the smallest eigenvalue
of the symmetric tensor $\sind^{[2]}$, we obtain
\begin{equation*}
1\le \frac{1-\lambda^2_{i}(x)\lambda^2_{j}(x)}{(1+\lambda^2_i(x))(1+\lambda^2_j(x))},
\end{equation*}
for any $x\in M$ and $1\le i<j\le m$. From the above inequality one can readily see that
all the singular values of $f$ vanish everywhere. Thus, in this case $f$ is constant.
This completes the proof of Claim $3$.

Since we are assuming that $f$ is weakly area decreasing, we deduce that $\kappa\mu\le1$.
Consider now the symmetric $2$-tensor
$$\Phi:=\Phi_{\frac{2-\rho_{0}}{2+\rho_0}}=\sind-\frac{\rho_0}{2}\gind.$$
According to Lemma \ref{laplacesind},
$$\Delta\Phi^{[2]}=\left(\Delta\Phi\right)^{[2]}.$$
At $x_0$ consider an orthonormal bases $\{e_1,\dots,e_m\}$
with respect to $\gind$ such that
$\sind$ becomes diagonal and
$$\sind(e_k,e_k)=\frac{1-\lambda_k^2}{1+\lambda_k^2}.$$
According to Theorem \ref{test},
we obtain
\begin{eqnarray*}
0&\le&\left(\Delta\Phi\right)^{[2]}\left( e_{m-1}\wedge  e_m,
 e_{m-1}\wedge  e_m\right) \\
&=&\left(\Delta\Phi\right)\left( e_{m-1}, e_{m-1}\right)
+\left(\Delta\Phi\right)\left( e_m, e_m\right).
\end{eqnarray*}
In view of Lemma \ref{laplacephi}, we deduce that
\begin{eqnarray}
0&\le&2\Phi(\Ric  e_{m-1}, e_{m-1})+2\Phi(\Ric  e_m, e_m) \nonumber\\
&&+2\sum^{m}_{k=1}\Bigl(\sk-\frac{\rho_0}{2}\gk\Bigr)(A( e_k, e_{m-1}),A( e_k, e_{m-1}))\nonumber \\
&&+2\sum^{m}_{k=1}\Bigl(\sk-\frac{\rho_0}{2}\gk\Bigr)(A( e_k, e_m),A( e_k, e_m))\nonumber \\
&&+(2+\rho_0)\sum^{m}_{k=1}f^*\rn( e_k,e_{m-1}, e_k, e_{m-1}) \nonumber \\
&&-(2-\rho_0)\sum^{m}_{k=1}\rm( e_k, e_{m-1}, e_k, e_{m-1}) \nonumber \\
&&+(2+\rho_0)\sum^{m}_{k=1}f^*\rn(e_k, e_m, e_k, e_m) \nonumber \\
&&-(2-\rho_0)\sum^{m}_{k=1}\rm( e_k, e_m, e_k, e_m).\label{inequality5}
\end{eqnarray}
Because $ e_{m}$ is an eigenvector of $\sind$ with respect to $\gind$, we have
\begin{eqnarray*}
\Phi(\Ric  e_{m}, e_{m})=\frac{\kappa-\mu}{(1+\kappa)(1+\mu)}\gind(\Ric  e_{m}, e_{m}).
\end{eqnarray*}
From the Gau{\ss} equation (\ref{gauss}) and the minimality of the graph, we obtain that
\begin{eqnarray*}
\gind(\Ric  e_{m}, e_{m})&=&\sum^{m}_{k=1}\rm( e_k, e_{m}, e_k, e_{m}) \\
&&+\sum^{m}_{k=1}f^*\rn(e_k,e_m,e_k,e_m) \\
&&-\sum^{m}_{k=1}\gk(A( e_k, e_{m}),A( e_k, e_{m})).
\end{eqnarray*}
Hence,
\begin{eqnarray}
\Phi(\Ric  e_m, e_m)&=&\tfrac{\kappa-\mu}{(1+\kappa)(1+\mu)}\tsum\limits^{m}_{k=1}
\rm( e_k, e_m, e_k, e_m) \label{phi1}\\
&+&\tfrac{\kappa-\mu}{(1+\kappa)(1+\mu)}\tsum\limits^{m}_{k=1}
f^*\rn( e_k, e_m, e_k, e_m) \nonumber \\
&-&\tfrac{\kappa-\mu}{(1+\kappa)(1+\mu)}\tsum\limits^{m}_{k=1}\gk(A( e_k, e_m),A( e_k, e_m))\nonumber.
\end{eqnarray}
Similarly,
\begin{eqnarray}
\Phi(\Ric  e_{m-1}, e_{m-1})&=&\tfrac{\mu-\kappa}{(1+\kappa)(1+\mu)}\tsum\limits^{m}_{k=1}\rm
( e_k, e_{m-1}, e_k, e_{m-1})\label{phi2} \\
&+&\tfrac{\mu-\kappa}{(1+\kappa)(1+\mu)}\tsum\limits^{m}_{k=1}f^*\rn
( e_k, e_{m-1}, e_k, e_{m-1})\nonumber \\
&-&\tfrac{\mu-\kappa}{(1+\kappa)(1+\mu)}\tsum\limits^{m}_{k=1}\gk(A(e_k, e_{m-1}),A( e_k, e_{m-1})).\nonumber
\end{eqnarray}
In view of (\ref{phi1}) and (\ref{phi2}), the inequality (\ref{inequality5}) can be now written equivalently in
the form
\begin{eqnarray}
0&\le&\sum^{m}_{k=1}(\sk-\frac{1-\mu}{1+\mu}\gk)(A( e_k, e_m),A( e_k, e_m))\nonumber \\
&&+\sum^{m}_{k=1}(\sk-\frac{1-\kappa}{1+\kappa}\gk)(A( e_k, e_{m-1}),A( e_k, e_{m-1}))\nonumber \\
&&+\frac{2}{1+\mu}\sum^{m}_{k=1}\left(f^*\rn-\mu\rm\right)( e_k, e_m, e_k, e_m)\nonumber \\
&&+\frac{2}{1+\kappa}\sum^{m}_{k=1}\left(f^*\rn-\kappa\rm\right)( e_k, e_{m-1}, e_k, e_{m-1}).\label{inequality6}
\end{eqnarray}

\textbf{Claim 4.}\textit{ The sum $\mathcal{A}$ of the first two terms on the right hand side of inequality $(\ref{inequality6})$
is non-positive.}

Indeed, if $\mu=0$, then $f$ is constant by Claim $3$ and thus $\mathcal{A}=0$. So, let us consider the case where $\mu>0$.
From Theorem \ref{test} again, we have
\begin{eqnarray}
0&=&(\nabla_{e_k}(\sind^{[2]}-\rho_0\Gind))( e_m\wedge e_{m-1}, e_m\wedge e_{m-1}) \nonumber\\
&=&2(\nabla_{ e_k}\sind)( e_m, e_m)+2(\nabla_{ e_k}\sind)( e_{m-1}, e_{m-1})\nonumber\\
&=&4\sk(A( e_k, e_m), e_m)+4\sk(A( e_k, e_{m-1}), e_{m-1})\label{eq zero}
\end{eqnarray}
for any $k$. Since $\dim(N)=1$ implies that $\operatorname{rank}(\df)\le 1$, from (\ref{mixed}) we obtain
\begin{eqnarray*}
0&=&A_{\xi_n}(e_k,e_m)\sk(\xi_n,e_m)+A_{\xi_n}(e_k,e_{m-1})\underbrace{\sk(\xi_n,e_{m-1})}_{=0}\\
&=&-2A_{\xi_n}(e_k,e_m)\underbrace{\frac{\sqrt{\mu}}{1+\mu}}_{>0},
\end{eqnarray*}
where here
$$A_{\xi}(v,w):=\gn(A(v,w),\xi),\quad v,w\in T_xM,$$
stands for the second fundamental form of the graph $\Gamma(f)$ in the normal direction $\xi$
and the normal basis $\{\xi_1,\dots,\xi_n\}$ is chosen as in (\ref{normal}).
Hence, by the weakly area decreasing property of $f$, we get
$$\mathcal{A}=-\sum_{k=1}^m\left(\frac{1-\mu}{1+\mu}+\frac{1-\kappa}{1+\kappa}\right)A_{\xi_n}^2(e_k,e_{m-1})\le 0.$$

In case $\dim(N)\ge 2$, from (\ref{normal}), (\ref{eq zero}) and the weakly area decreasing condition we obtain
\begin{eqnarray*}
\mathcal{A}&=&\sum^{m}_{k=1}(\sk-\frac{1-\mu}{1+\mu}\gk)(A( e_k, e_m),A( e_k, e_m)) \\
&&+\sum^{m}_{k=1}(\sk-\frac{1-\kappa}{1+\kappa}\gk)(A( e_k, e_{m-1}),A( e_k, e_{m-1})) \\
&\le&-2\frac{1-\mu}{1+\mu}\sum^{m}_{k=1}A^2_{\xi_n}( e_k, e_m)
-2\frac{1-\kappa}{1+\kappa}\sum^{m}_{k=1}A^2_{\xi_{n-1}}( e_k, e_{m-1}).
\end{eqnarray*}
In view of equations (\ref{eq zero}) and (\ref{mixed}), we have
\begin{eqnarray*}
0&=&\sk(A( e_k, e_m), e_m)+\sk(A( e_k, e_{m-1}), e_{m-1}) \\
&=&-2\frac{\sqrt{\mu}}{1+\mu}A_{\xi_n}( e_k, e_m)-2\frac{\sqrt{\kappa}}{1+\kappa}A_{\xi_{n-1}}( e_k, e_{m-1}).
\end{eqnarray*}
Hence,
$$A^2_{\xi_n}( e_k, e_m)=\frac{\kappa(1+\mu)^2}{\mu(1+\kappa)^2}A^2_{\xi_{n-1}}( e_k, e_{m-1}).$$
Because, $\kappa\le\mu$ and $\kappa\mu\le1$, we deduce that
$$\frac{\kappa(1+\mu)^2}{\mu(1+\kappa)^2}\le 1.$$
This proves our assertion. Now it is clear that the quantity $\mathcal{A}$ is always non-positive which
proves Claim $4$.

{\bf Claim 5.} \textit{The sum $\mathcal{B}$ of the last two terms on the right hand side
of inequality (\ref{inequality6}) is non-positive}.

We have,
\begin{eqnarray*}
\mathcal{B}&=&\frac{1}{1+\mu}\underbrace{\sum^{m}_{k=1}2\left(f^*\rn-\mu\rm\right)
( e_k, e_m, e_k, e_m)}_{=:\mathcal{B}_1} \nonumber \\
&&+\frac{1}{1+\kappa}\underbrace{\sum^{m}_{k=1}2\left(f^*\rn-\kappa\rm\right)
( e_k, e_{m-1}, e_k, e_{m-1})}_{=:\mathcal{B}_2}\nonumber
\end{eqnarray*}
From the identities (\ref{met2}) and (\ref{met4}), we deduce that
$$\gm=\frac{1}{2}(\gind+\sind)\quad\text{and}\quad f^*\gn=\frac{1}{2}(\gind-\sind).$$
Since $\{ e_1,\dots, e_{m}\}$ diagonalizes $\gind$ and $\sind$, it follows that it
diagonalizes $\gm$ and $f^*\gn$ as well. In fact, for any $i\in\{1,\dots,m\}$, we have
$$f^*\gn( e_{i}, e_{i})=\lambda^2_{i}\gm( e_{i}, e_{i}).$$
Proceeding exactly as in the proof of Lemma \ref{lem cd}, but using $\mu\sigma_{M}$ instead of $\sigma_M$ and
$\mu\sigma$ instead of $\sigma$, we obtain that
\begin{eqnarray*}
\mathcal{B}_1&=&2\sum_{k\neq m}\Bigl(f^*\rn(e_k,e_m,e_k,e_m)-\mu\rm(e_k,e_m,e_k,e_m)\Bigr) \\
&=&-\frac{2\mu}{1+\mu}\sum_{k\neq m}f^*\gn(e_k,e_k)\Bigl(\sigma-\sigma_N(\df(e_k)\wedge\df(e_m))\Bigr)\\
&&-\mu\Bigl(\operatorname{Ric}_M(e_m,e_m)-(m-1)\sigma\,\gm(e_m,e_m)\Bigr)\\
&&-\frac{\mu}{1+\mu}\sum_{k\neq m}\frac{1-\lambda_k^2}{1+\lambda_k^2}\Bigl(\sigma_M(e_k\wedge e_{m})+\sigma\Bigr).
\end{eqnarray*}
Similarly,
\begin{eqnarray*}
\mathcal{B}_2&=&2\sum\limits_{k\neq m-1}\Bigl(f^*\rn(e_k,e_{m-1},e_k,e_{m-1})-\kappa\rm(e_k,e_{m-1},e_k,e_{m-1})\Bigr) \\
&=&-\frac{2\kappa}{1+\kappa}\sum_{k\neq m-1}f^*\gn(e_k,e_k)\Bigl(\sigma-\sigma_N(\df(e_k)\wedge\df(e_{m-1}))\Bigr)\\
&&-\kappa\Bigl(\operatorname{Ric}_M(e_{m-1},e_{m-1})-(m-1)\sigma\,\gm(e_{m-1},e_{m-1})\Bigr)\\
&&-\frac{\kappa}{1+\kappa}\sum_{k\neq m-1}\frac{1-\lambda_k^2}{1+\lambda_k^2}\Bigl(\sigma_M(e_k\wedge e_{m-1})+\sigma\Bigr).
\end{eqnarray*}
Taking into account that $\lambda^{2}_{1}\le\cdots\le\lambda^{2}_{m-2}\le 1$, we deduce that
\begin{eqnarray}
\mathcal{B}&=&-\frac{2\mu}{(1+\mu)^2}\sum_{k\neq m}f^*\gn(e_k,e_k)\Bigl(\sigma-\sigma_N(\df(e_k)\wedge\df(e_m))\Bigr) \nonumber\\
&&-\frac{2\kappa}{(1+\kappa)^2}\sum_{k\neq m-1}f^*\gn(e_k,e_k)\Bigl(\sigma-\sigma_N(\df(e_k)\wedge\df(e_{m-1}))\Bigr)\nonumber\\
&&-\frac{\mu}{1+\mu}\Bigl(\operatorname{Ric}_M(e_{m},e_{m})-(m-1)\sigma\,\gm(e_{m},e_{m})\Bigr)\nonumber\\
&&-\frac{\kappa}{1+\kappa}\Bigl(\operatorname{Ric}_M(e_{m-1},e_{m-1})-(m-1)\sigma\,\gm(e_{m-1},e_{m-1})\Bigr)\nonumber\\
&&-\frac{\mu}{(1+\mu)^2}\sum_{k=1}^{m-2}\frac{1-\lambda_k^2}{1+\lambda_k^2}\Bigl(\sigma_M(e_k\wedge e_{m})+\sigma\Bigr)\nonumber\\
&&-\frac{\kappa}{(1+\kappa)^2}\sum_{k=1}^{m-2}\frac{1-\lambda_k^2}{1+\lambda_k^2}\Bigl(\sigma_M(e_k\wedge e_{m})+\sigma\Bigr)\nonumber \\
&&-\frac{(\kappa+\mu)(1-\kappa\mu)}{(1+\kappa)(1+\mu)}\Bigl(\sigma_M(e_{m-1}\wedge e_{m})+\sigma\Bigr)\nonumber\\
&&\le 0\label{inequality7}.
\end{eqnarray}
This completes the proof of Claim $5$.

Now we shall distinguish two cases.

{\bf Case 1.} Assume at first that $f$ is strictly area decreasing. Hence, $\kappa\mu<1$. In view of our
curvature assumptions, Claim $4$, Claim $5$, (\ref{inequality7}) and from inequality (\ref{inequality6}) we deduce
that $\kappa=\mu=0$. Hence, from Claim $3$ the map $f$ must be constant.

{\bf Case 2.} Suppose now that there exists a point $x_0\in M$ such that
$\|\Lambda^2\df\|(x_0)=1$.
In this case we have that $\kappa\mu=1$. From Claim $1$, (\ref{inequality7}) and
inequality (\ref{inequality6}) we deduce that at $x_0$ we must have
$$1=\lambda^{2}_{1}(x_0)=\cdots=\lambda^{2}_{m-2}(x_0)\le\kappa\le 1.$$
Hence, $\kappa=1$ and so $\mu=1$. Therefore, at each point $x$ where $\Phi^{[2]}$
has a zero eigenvalue, all the singular values of $f$ are equal to $1$. Thus, the set
$$D:=\{x\in M:\|\Lambda^2\df\|=1\},$$
is closed, non-empty and moreover $D=\{x\in M:f^{\ast}\gn=\gm\}.$
Obviously, the map $f$ is strictly area decreasing on the complement of $D$. Moreover,
by (\ref{inequality7}), $\Ric_{M}=(m-1)\sigma$ at any point of $D$ and the restriction of $\sigma_{N}$
to $\df(TD)$ is equal to $\sigma$.

This completes the proof of Theorem \ref{thmC}.
\hfill$\square$

{\bf Proof of Theorem \ref{thmE}.}
Note that in this case the singular values of the map $f$ are
$$0=\lambda^{2}_{1}=\cdots=\lambda^{2}_{m-1}=\kappa\le\mu.$$
Hence, automatically, $f$ is strictly area decreasing. From Claim $4$, Claim $5$,
inequality (\ref{inequality5}) and (\ref{inequality7}), we deduce that
$$0\le-2\mu{\Ric}_{M}(e_m,e_m)\le 0.$$
Thus $\mu=0$ and $f$ is a constant map. This completes the proof of Theorem \ref{thmE}.
\hfill$\square$

\subsection{Final remarks}
We end this paper with examples and remarks concerning the imposed assumptions
in Theorems \ref{thmD}, \ref{thmC} and \ref{thmE}.

\begin{remark}
In several cases, graphical submanifolds over $(M,\gm)$ with \textit{parallel mean curvature}, i.e.,
$$\nabla^{\perp}H=0,$$
where $\nabla^{\perp}$ stands for the connection of the normal bundle, must be minimal.
This problem was first considered by Chern in \cite{c}.  So, whenever graphs with parallel mean curvature
vector are minimal we can immediately apply Theorems \ref{thmD}, \ref{thmC} and \ref{thmE}.
For example this can be done for graphs considered in the paper by G. Li and I.M.C. Salavessa \cite{li}.
\end{remark}

\begin{remark}
The reason that the result of Theorem \ref{thmC} is weaker than that of Theorem \ref{thmD} is
due to the fact that in Theorem \ref{thmC} we cannot apply the strong elliptic maximum principle
stated in Theorem \ref{mp2}. In fact, the null-eigenvector condition of the corresponding
tensor $\Psi(\vartheta^{[2]})$ in the equation of $\Delta \sind^{[2]}$ seems to hold only for some
weakly $2$-positive definite tensors $\vartheta$, including $\sind$.
\end{remark}

\begin{remark}
In some situations, a minimal map $f:M\to N$ satisfying the assumptions in Theorem \ref{thmC}
can only be constant.  For instance, if $\dim M>\dim N$ the map $f$ cannot be an isometric immersion
since $\operatorname{rank}(\df)<\dim M$. Moreover, if $M$ is not Einstein or the sectional curvature
of $N$ is strictly less than $\sigma$, then any such map must be constant.
\end{remark}

\begin{remark}
In this remark we show that the assumptions on the curvatures of $M$ and $N$ in Theorems
\ref{thmD} and \ref{thmC} are sharp.
\begin{enumerate}[i)]
\item{\bf Scaling.}
Suppose that $f:M\to N$ is a smooth map between two Riemannian manifolds
$(M,\gm)$ and $(N,\gn)$, and assume that there exists a constant $c>0$ such that
$f^*\gn<c\,\gm.$
Clearly such a constant exists, if $M$ is compact. Define the rescaled metrics
$$\widetilde{\gind}_M:=c\gm\,,\quad\widetilde{\gind}_N:=c^{-1}\gn\,.$$
One can verify that $f$ is a length (and obviously area) decreasing map with respect to the Riemannian metrics
$\widetilde{\gind}_M$ and $\gn$, as well as with respect to the metrics $\gm$ and $\widetilde{\gind}_N$.
Thus, any smooth map can be made a length decreasing map, if either the domain or the target is
scaled appropriately.
\item
{\bf Totally geodesic maps.}
There are plenty of  non constant length decreasing minimal maps.
For instance, assume that $(M,\gm)$ is a Riemannian manifold and $c\in(0,1)$ a real constant. The
identity map $\Id:(M,\gm)\to(M,c^{-1}\gm)$ gives a length decreasing minimal map whose graph
$\Gamma(\Id)$ is even totally geodesic. If $\sigma_{M}$ and $\sigma_{N}$ are the sectional
curvatures of $(M,\gm)$ and $(N,c^{-1}\gm)$, respectively, then
$$\sigma_{N}=c^{-1}\sigma_{M}>\sigma_{M}.$$
Consequently, Theorems \ref{thmD} and \ref{thmC} are not valid if we assume $\sigma_{N}>\sigma_{M}$. Moreover, the assumption
$\sigma>0$ is essential in these theorems and cannot be removed. Indeed, consider the flat $2$-dimensional
torus $(\mathbb{T}^2,\gind_{\mathbb{T}})$. By scaling properly the metric
$\gind_{\mathbb{T}}$, the
identity map $\Id:\mathbb{T}^2\to\mathbb{T}^2$ produces a length decreasing map. On the other hand, the
 scaled metric is again flat and $\Id$ is certainly neither constant nor an isometry.
\end{enumerate}
\end{remark}

\begin{example}
This example shows that there exists an abundance of length decreasing minimal maps that
are not totally geodesic.
\begin{enumerate}[i)]
\item{\bf Holomorphic maps.}
According to the
Schwarz-Pick {Lem\-ma}, any non-linear holomorphic map of the unit disc $D$ in the complex plane
$\mathbb{C}$ to itself is strictly length decreasing with respect to the Poincar\'e metric.
The holomorphicity implies that $f$ is a minimal map (cf., \cite{eells}). On the
other hand, L. Ahlfors \cite{ahlfors} exposed in his generalization of the Schwarz-Pick Lemma the
essential role played by the curvature. He proved that if $f:M\to N$ is a holomorphic map, where
$N$ is a Riemann surface with a metric $\gn$ whose Gaussian curvature is bounded from above by a
negative constant $-b$ and $M:=D$ is the unit disc in $\mathbb{C}$ endowed with
an invariant metric $\gm$ whose Gaussian curvature is a negative constant $-a$, then
$$f^{*}\gn\leq \frac{a}{b}\gm.$$
Ahlfors' result was extended by S.T. Yau \cite{yau1} for holomorphic maps between
complete K\"{a}hler manifolds. More precisely, Yau showed that any holomorphic map $f:M\to N$,
where here $M$ is a complete K\"{a}hler manifold with Ricci curvature bounded from below by a negative
constant $-a$ and $N$ is a Hermitian manifold with holomorphic bisectional curvature  bounded
from above by a negative constant $-b$, then
$f^{*}\gn\leq\frac{a}{b}\gm.$
\item{\bf Biholomorphic maps.}
Let $M$ be a K\"{a}hler manifold and $\Aut(M)$ its \textit{automorphism group}, that is
the group of all biholomorphic maps of $M$. When $m\ge 4$, the group $\Aut(M)$ can be
arbitrary large (cf. \cite{akhiezer}). This indicates that the results of Theorem \ref{thmC},
cannot be extended for the $m$-Jacobian $\Lambda^{m}\df$. For example, let $M$ be
compact, $y_0$ a fixed point on $M$, and $f\in\Aut(M)$. Then, the map $\tilde{f}:M\times M\to M\times M$,
$\tilde{f}(x,y)=(f(x),y_0),$
is minimal, as holomorphic, and has identically zero $m$-Jacobian.
In the flat case we can give even explicit
examples. For instance, consider the map $f:\mathbb{C}^2=\real{4}\to\mathbb{C}^2=\real{4}$, given by
$$f(z,w):=(\beta z+h(w),w),\quad z,w\in\mathbb{C},$$
where $h:\mathbb{C}\to\mathbb{C}$ is a non-affine holomorphic map and $\beta\le 1$ a positive
real number. Note that the graph $\Gamma(f)$ is minimal in $\real{8}$, $\|\Lambda^{4}\df\|=\beta\le 1$
and $f$ is certainly not an isometry.
\end{enumerate}
\end{example}

\begin{remark}
Let $M$ and $N$ be two Riemannian manifolds satisfying the curvature assumptions in Theorem \ref{thmC}.
Following essentially the same computations as in the proof of Theorem \ref{thmD}, we can prove that the
strictly area decreasing property of a map $f:M\to N$ is preserved under mean
curvature flow. The convergence shall be explored in
another article where we shall also derive a parabolic analogue of Theorem \ref{mp1}.
\end{remark}

{\small{\bf{Acknowledgments:}}{ The first author would like to express his gratitude to the Max-Planck Institut
f\"{u}r Mathematik in den Nauturwissenschaften Leipzig and especially to Professor J. Jost for the scientific
support and everything that he benefited during the stay at the Institute. Moreover, he would like
to thank Dr. B. Hua for many stimulating conversations.}}

% Literaturliste
%%%%%%%%%%%%%%%%%%%%%%%%%%%%%%%%%%%%%%%%%%%%%%%%%%%%%%%%%%%%%%%%%%%%%%%%%
\bibliographystyle{y2k}
\begin{bibdiv}
\begin{biblist}

\bib{ahlfors}{article}{
   author={Ahlfors, L.V.},
   title={An extension of Schwarz's lemma},
   journal={Trans. Amer. Math. Soc.},
   volume={43},
   date={1938},
   %number={3},
   pages={359--364},
   %issn={0002-9947},
   %review={\MR{1501949}},
   %doi={10.2307/1990065},
}

\bib{akhiezer}{book}{
   author={Akhiezer, D.N.},
   title={Lie group actions in complex analysis},
   series={Aspects of Mathematics, E27},
   publisher={Friedr. Vieweg \& Sohn},
   place={Braunschweig},
   date={1995},
   %pages={viii+201},
   %isbn={3-528-06420-X},
   %review={\MR{1334091 (96g:32051)}},
}

\bib{andrews}{book}{
   author={Andrews, B.},
   author={Hopper, C.},
   title={The Ricci flow in Riemannian geometry},
   series={Lecture Notes in Mathematics},
   volume={2011},
   note={A complete proof of the differentiable 1/4-pinching sphere
   theorem},
   publisher={Springer},
   place={Heidelberg},
   date={2011},
   %pages={xviii+296},
   %isbn={978-3-642-16285-5},
   %review={\MR{2760593 (2012d:53208)}},
}

\bib{bernstein}{article}{
   author={Bernstein, S.},
   title={\"Uber ein geometrisches Theorem und seine Anwendung auf die
   partiellen Differentialgleichungen vom elliptischen Typus},
   %language={German},
   journal={Math. Z.},
   volume={26},
   date={1927},
   %number={1},
   pages={551--558},
   %issn={0025-5874},
   %review={\MR{1544873}},
   %doi={10.1007/BF01475472},
}

\bib{bohm}{article}{
   author={B{\"o}hm, C.},
   author={Wilking, B.},
   title={Nonnegatively curved manifolds with finite fundamental groups
   admit metrics with positive Ricci curvature},
   journal={Geom. Funct. Anal.},
   volume={17},
   date={2007},
   %number={3},
   pages={665--681},
   %issn={1016-443X},
   %review={\MR{2346271 (2008h:53050)}},
   %doi={10.1007/s00039-007-0617-8},
}

\bib{bombieri}{article}{
   author={Bombieri, E.},
   author={Giorgi, E. de},
   author={Giusti, E.},
   title={Minimal cones and the Bernstein theorem},
   journal={Invent. Math.},
   volume={7},
   date={1969},
   pages={243-269},

}

\bib{brendle}{book}{
   author={Brendle, S.},
   title={Ricci flow and the sphere theorem},
   series={Graduate Studies in Mathematics},
   volume={111},
   publisher={American Mathematical Society},
   place={Providence, RI},
   date={2010},
   %pages={viii+176},
   %isbn={978-0-8218-4938-5},
   %review={\MR{2583938 (2011e:53035)}},
}

\bib{chern1}{article}{
   author={Chern, S.-S.},
   author={Osserman, R.},
   title={Complete minimal surfaces in euclidean $n$-space},
   journal={J. d'Analyse Math.},
   volume={19},
   date={1967},
   pages={15-34},

}

\bib{c}{article}{
   author={Chern, S.-S},
   title={On the curvatures of a piece of hypersurface in euclidean space},
   journal={Abh. Math. Sem. Univ. Hamburg},
   volume={29},
   date={1965},
   pages={77--91},
   %issn={0025-5858},
   %review={\MR{0188949 (32 \#6376)}},
}

\bib{ni1}{book}{
   author={Chow, B.},
   author={Chu, S.-C.},
   author={Glickenstein, D.},
   author={Guenther, C.},
   author={Isenberg, J.},
   author={Ivey, T.},
   author={Knopf, D.},
   author={Lu, P.},
   author={Luo, F.},
   author={Ni, L.},
   title={The Ricci flow: techniques and applications. Part II},
   series={Mathematical Surveys and Monographs},
   volume={144},
   note={Analytic aspects},
   publisher={American Mathematical Society},
   place={Providence, RI},
   date={2008},
   %pages={xxvi+458},
   %isbn={978-0-8218-4429-8},
   %review={\MR{2365237 (2008j:53114)}},
}

\bib{dalio}{article}{
   author={Da Lio, F.},
   title={Remarks on the strong maximum principle for viscosity solutions to
   fully nonlinear parabolic equations},
   journal={Commun. Pure Appl. Anal.},
   volume={3},
   date={2004},
   %number={3},
   pages={395--415},
   %issn={1534-0392},
   %review={\MR{2098291 (2005i:35121)}},
   %doi={10.3934/cpaa.2004.3.395},
}

\bib{ecker}{book}{
   author={Ecker, K.},
   title={Regularity theory for mean curvature flow},
   series={Progress in Nonlinear Differential Equations and their
   Applications, 57},
   publisher={Birkh\"auser Boston Inc.},
   place={Boston, MA},
   date={2004},
   %pages={xiv+165},
   %isbn={0-8176-3243-3},
   %review={\MR{2024995 (2005b:53108)}},
}

\bib{eells}{article}{
   author={Eells, J.},
   title={Minimal graphs},
   journal={Manuscripta Math.},
   volume={28},
   date={1979},
   %number={1-3},
   pages={101--108},
   %issn={0025-2611},
   %review={\MR{535698 (80i:58023)}},
   %doi={10.1007/BF01647968},
}

\bib{evans1}{article}{
   author={Evans, L.C.},
   title={A strong maximum principle for parabolic systems in a convex set
   with arbitrary boundary},
   journal={Proc. Amer. Math. Soc.},
   volume={138},
   date={2010},
   %number={9},
   pages={3179--3185},
   %issn={0002-9939},
   %review={\MR{2653943 (2011e:35044)}},
   %doi={10.1090/S0002-9939-2010-10495-1},
}

\bib{ferus}{article}{
   author={Ferus, D.},
   title={On the completeness of nullity foliations},
   journal={Michigan Math. J.},
   volume={18},
   date={1971},
   pages={61--64},
%   issn={0026-2285},
%   review={\MR{0279733 (43 \#5454)}},
}

\bib{fleming}{article}{
   author={Fleming, W.},
   title={On the oriented Plateau problem},
   journal={Rend. Circ. Mat. Palermo},
   volume={11},
   date={1962},
   pages={69--90},

}

\bib{hamilton1}{article}{
   author={Hamilton, R.},
   title={Four-manifolds with positive curvature operator},
   journal={J. Differential Geom.},
   volume={24},
   date={1986},
   %number={2},
   pages={153--179},
   %issn={0022-040X},
   %review={\MR{862046 (87m:53055)}},
}

\bib{hamilton2}{article}{
   author={Hamilton, R.},
   title={Three-manifolds with positive Ricci curvature},
   journal={J. Differential Geom.},
   volume={17},
   date={1982},
   %number={2},
   pages={255--306},
   %issn={0022-040X},
   %review={\MR{664497 (84a:53050)}},
}

\bib{h-s1}{article}{
   author={Hasanis, Th.},
   author={Savas-Halilaj, A.},
   author={Vlachos, Th.},
   title={On the Jacobian of minimal graphs in $\Bbb R^4$},
   journal={Bull. Lond. Math. Soc.},
   volume={43},
   date={2011},
   pages={321--327},
}

\bib{h-s2}{article}{
   author={Hasanis, Th.},
   author={Savas-Halilaj, A.},
   author={Vlachos, Th.},
   title={Minimal graphs in $\Bbb R^4$ with bounded Jacobians},
   journal={Proc. Amer. Math. Soc.},
   volume={137},
   date={2009},
   pages={3463--3471},
}
		
\bib{hildebrandt}{article}{
   author={Hildebrandt, S.},
   author={Jost, J.},
   author={Widman, K.-O.},
   title={Harmonic mappings and minimal submanifolds},
   journal={Invent. Math.},
   volume={62},
   date={1980/81},
   %number={2},
   pages={269--298},
   %issn={0020-9910},
   %review={\MR{595589 (82d:58025)}},
   %doi={10.1007/BF01389161},
}

\bib{hopf}{article}{
   author={Hopf, E.},
   title={Elementare Bemerkungen \"{u}ber die L\"{o}sungen partieller
          Differentialgleichungen zweiter Ordnung vom elliptischen Typus},
   journal={Sitzungsberichte Akad. Berlin.},
   volume={19},
   date={1927}
   pages={147--152},
}

\bib{jxy}{article}{
   author={Jost, J.},
   author={Xin, Y.-L.},
   author={Yang, L.},
   title={The geometry of Grassmannian manifolds and Bernstein type theorems for higher codimension},
   journal={arXiv:1109.6394},
   date={2011},
   pages={1--36},
}

\bib{lee}{article}{
   author={Lee, K.-W.},
   author={Lee, Y.-I.},
   title={Mean curvature flow of the graphs of maps between compact
   manifolds},
   journal={Trans. Amer. Math. Soc.},
   volume={363},
   date={2011},
   %number={11},
   pages={5745--5759},
   %issn={0002-9947},
   %review={\MR{2817407}},
   %doi={10.1090/S0002-9947-2011-05204-9},
}

\bib{li}{article}{
   author={Li, G.},
   author={Salavessa, I.M.C.},
   title={Bernstein-Heinz-Chern results in calibrated manifolds},
   journal={Rev. Mat. Iberoam.},
   volume={26},
   date={2010},
   %number={2},
   pages={651--692},
   %issn={0213-2230},
   %review={\MR{2677011 (2011m:53105)}},
   %doi={10.4171/RMI/613},
}

\bib{schoen}{article}{
   author={Schoen, R.},
   title={The role of harmonic mappings in rigidity and deformation
   problems},
   conference={
      title={Complex geometry},
      address={Osaka},
      date={1990},
   },
   book={
      series={Lecture Notes in Pure and Appl. Math.},
      volume={143},
      publisher={Dekker},
      place={New York},
   },
   date={1993},
   pages={179--200},
   %review={\MR{1201611 (94g:58055)}},
}

\bib{schoen1}{article}{
   author={Schoen, R.},
   author={Simon, L.},
   author={Yau, S.-T.},
   title={Curvature estimates for minimal hypersurfaces},
   journal={Acta Math.},
   volume={134},
   date={1975},
   %number={3-4},
   pages={275--288},
   %issn={0001-5962},
   %review={\MR{0423263 (54 \#11243)}},
}

\bib{simons}{article}{
   author={Simons, J.},
   title={Minimal varieties in Riemannian manifolds},
   journal={Ann. of Math.},
   volume={88},
   date={1968},
   pages={62--105},
   %issn={0003-486X},
   %review={\MR{0233295 (38 \#1617)}},
}

\bib{swx}{article}{
   author={Smoczyk, K.},
   author={Wang, G.},
   author={Xin, Y.-L.},
   title={Bernstein type theorems with flat normal bundle},
   journal={Calc. Var. Partial Differential Equations},
   volume={26},
   date={2006},
   pages={57--67},
}

\bib{sw}{article}{
   author={Smoczyk, K.},
   author={Wang, M.-T.},
   title={Mean curvature flows of Lagrangians submanifolds with convex
   potentials},
   journal={J. Differential Geom.},
   volume={62},
   date={2002},
   pages={243--257},
}

\bib{suss}{article}{
   author={S\"uss, W.},
   title={\"Uber Kennzeichnungen der Kugeln und Affinsph\"aren durch Herrn K.-P. Grotemeyer},
   journal={Arch. Math. (Basel)},
   volume={3},
   date={1952},
   pages={311-313},
   %issn={0003-889X},
   %review={\MR{0053548 (14,789b)}},
}

\bib{tsui1}{article}{
   author={Tsui, M.-P.},
   author={Wang, M.-T.},
   title={Mean curvature flows and isotopy of maps between spheres},
   journal={Comm. Pure Appl. Math.},
   volume={57},
   date={2004},
   %number={8},
   pages={1110--1126},
   %issn={0010-3640},
   %review={\MR{2053760 (2005b:53110)}},
   %doi={10.1002/cpa.20022},
}

\bib{yau1}{article}{
   author={Yau, S.-T.},
   title={A general Schwarz lemma for K\"ahler manifolds},
   journal={Amer. J. Math.},
   volume={100},
   date={1978},
   %number={1},
   pages={197--203},
   %issn={0002-9327},
   %review={\MR{0486659 (58 \#6370)}},
}

\bib{wang1}{article}{
   author={Wang, M.-T.},
   title={Long-time existence and convergence of graphic mean curvature flow
   in arbitrary codimension},
   journal={Invent. Math.},
   volume={148},
   date={2002},
   %number={3},
   pages={525--543},
   %issn={0020-9910},
   %review={\MR{1908059 (2003b:53073)}},
   %doi={10.1007/s002220100201},
}

\bib{wang2}{article}{
   author={Wang, M.-T.},
   title={Mean curvature flow of surfaces in Einstein four-manifolds},
   journal={J. Differential Geom.},
   volume={57},
   date={2001},
   %number={2},
   pages={301--338},
   %issn={0022-040X},
   %review={\MR{1879229 (2003j:53108)}},
}

\bib{wang3}{article}{
   author={Wang, M.-T.},
   title={Deforming area preserving diffeomorphism of surfaces by mean
   curvature flow},
   journal={Math. Res. Lett.},
   volume={8},
   date={2001},
   %number={5-6},
   pages={651--661},
   %issn={1073-2780},
   %review={\MR{1879809 (2003f:53122)}},
}

\bib{wang4}{article}{
   author={Wang, X.},
   title={A remark on strong maximum principle for parabolic and elliptic
   systems},
   journal={Proc. Amer. Math. Soc.},
   volume={109},
   date={1990},
   %number={2},
   pages={343--348},
   %issn={0002-9939},
   %review={\MR{1019284 (90k:35048)}},
   %doi={10.2307/2047994},
}

\bib{weinberger}{article}{
   author={Weinberger, H.F.},
   title={Invariant sets for weakly coupled parabolic and elliptic systems},
   %language={English, with Italian summary},
   %note={Collection of articles dedicated to Mauro Picone on the occasion of
   %his ninetieth birthday},
   journal={Rend. Mat. (6)},
   volume={8},
   date={1975},
   pages={295--310},
   %issn={0034-4427},
   %review={\MR{0397126 (53 \#986)}},
}

\end{biblist}
\end{bibdiv}

\end{document}